\newcommand{\R}{{\mathbb{R}}}
\newcommand{\Z}{{\mathbb{Z}}}
\newcommand{\C}{{\mathbb{C}}}
\newcommand{\F}{{\mathbb{F}}}
\newcommand{\Fk}{{\F_k}}
\newcommand{\ii}{{\mathbf{i}}}
\newcommand{\ee}{{\mathbf{e}}}
\newcommand{\fm}{{\mathfrak{m}}}
\newcommand{\cO}{{\mathcal{O}}}
\newcommand{\cE}{{\mathcal{E}}}
\newcommand{\cI}{{\mathcal{I}}}
\newcommand{\cV}{{\mathcal{V}}}
\newcommand{\del}{{\partial}}
\newcommand{\rt}{{\sqrt{-1}}}
\newcommand{\pr}{{\mathrm{pr}}}
\newcommand{\grad}{{\mathrm{grad}}}
\newcommand{\Int}{{\mathrm{Int}}}
 \newtheorem{thm}{Theorem}[section]
 \newtheorem{lem}[thm]{Lemma}
 \newtheorem{cor}[thm]{Corollary}
\theoremstyle{definition}
 \newtheorem{rem}[thm]{Remark}
\begin{document}

\title{SYZ mirror of Hirzebruch surfaces $\mathbb{F}_k$ and Morse homotopy}
\author{HAYATO NAKANISHI}
\address{Department of Mathematics and Informatics, Graduate School of Science and Engineering, Chiba University, 1-33
Yayoicho, Inage, Chiba, 263-8522 Japan.}
\email{hayato\underline{ }nakanishi@chiba-u.jp}
\date{}

\maketitle

\begin{abstract}
We study homological mirror symmetry for Hirzebruch surfaces $\mathbb{F}_k$ as complex manifolds by using the Strominger-Yau-Zaslow construction of mirror pair and Morse homotopy. For  toric Fano surfaces, Futaki-Kajiura and the author proved homological mirror symmetry by using Morse homotopy in \cite{fut-kaj1,fut-kaj2,HN}. In this paper, we extend Futaki-Kajiura's result of the Hirzebruch surface $\mathbb{F}_1$ to $\mathbb{F}_k$. We discuss Morse homotopy and show that homological mirror symmetry in the sense above holds true.
\end{abstract} 

\tableofcontents

\section{Introduction.}
 Strominger-Yau-Zaslow \cite{SYZ} proposed a construction of a mirror pair using dual torus fibrations. Recently, various mirror pairs have been found to have dual torus fibrations. Kontsevich-Soibelman \cite{KoSo:torus} proposed a framework to systematically prove homological mirror symmetry via Morse homotopy for dual torus fibrations over a closed manifold without singular fibers.  Fukaya-Oh \cite{FO} proved that the category $Mo(B)$ of Morse homotopy on a compact manifold $B$ is equivalent to the Fukaya category $Fuk(T^*B)$ of the cotangent bundle $T^*B$, and Kontsevich-Soibelman's approach is based on this result. Futaki-Kajiura and the author apply SYZ construction to toric Fano surfaces as complex manifolds and discuss explicitly homological mirror symmetry.\par
Fano surfaces are also called del Pezzo surfaces, and homological mirror symmetry of toric del Pezzo surfaces is discussed by Auroux-Katszarkov-Orlov \cite{AKO} and Ueda \cite{U}. In these papers, they treat the toric del Pezzo surfaces as complex manifolds and consider a Fukaya-Seidel category \cite{seidel} corresponding to a Landau-Ginzburg superpotential of toric del Pezzo surfaces as the symplectic side. Also, Abouzaid showed homological mirror symmetry of toric varieties by using Morse homotopy on a polytope in \cite{A}.
The polytope used in his approach is different from that used in our approach.
 The polytope in \cite{A} is a connected component of the complement of the tropical amoeba for the general fiber of the Laurent polynomial. On the other hand, the polytope in our papers is just the moment polytope of toric manifolds itself.
 Also, the category of Morse homotopy, defined by Futaki-Kajiura \cite{fut-kaj1}, is slightly different from that defined by Abouzaid \cite{A}. A space of morphisms of the category defined by Abouzaid is spanned by non-degenerate critical points. On the other hand,  a space of morphisms of the category defined by Futaki-Kajiura is spanned by critical points that can be degenerated and be in the boundary of the moment polytope.
\par
In this paper, we extend the Futaki-Kajiura's result of the Hirzebruch surface $\F_1$ in \cite{fut-kaj2} to the non-Fano case $\Fk$.
Namely, we consider Hirzebruch surfaces $\Fk$ as complex manifolds and discuss homological mirror symmetry by using the SYZ construction and Morse homotopy. In the case of $\Fk$, we can not express the dual coordinates of the SYZ fibrations explicitly, unlike the case of $\F_1$.
 Therefore, we can not directly compute the intersections of Lagrangian sections with respect to the coordinate of the moment polytope. 
 However, by computing the intersection of Lagrangian sections with respect to the dual coordinates as in the previous paper \cite{HN}, we can investigate the intersection of Lagrangian sections. Furthermore, the Hirzebruch surface $\Fk$ have a full strongly exceptional collection. Therefore, it is sufficient to consider a full subcategory consisting of the full strongly exceptional collection.
 In view of this situation, we prove that Hirzebruch surfaces $\Fk$ satisfy homological mirror symmetry for any positive integer $k$. On the other hand, the category of Morse homotopy of $\Fk$ behaves differently for $k=1$ or $k\geq2$. We also investigate the difference between these categories.\par
This paper is organized as follows. In section 2, we recall the SYZ fibration set-up and explain a plan for proving homological mirror symmetry of Hirzebruch surfaces $\Fk$. In section 3, we review the explicit SYZ construction for $\Fk$ used in \cite{fut-kaj2} and extend the correspondence in $\F_1$ between Lagrangian sections and holomorphic line bundles to that in $\Fk$. 
\par
\noindent
{\bf Acknowledgments.}
I would like to thank my supervisor Professor Hiroshige Kajiura for various advices in writing this paper.
This work was supported by JST SPRING, Grant Number JPMJSP2109.

\section{Preliminaries.}
In this section, we briefly review the SYZ construction following \cite{LYZ,leung05,fut-kaj1,fut-kaj2,HN}.
\subsection{Torus bundle and dual torus bundle.}\label{MM}
Let $(\check{X},g_{\check{M}})$ be a smooth compact toric manifold where $g_{\check{M}}$ is a K\"ahler metric. For a moment map $\mu:\check{X}\to P$, we put $B=\Int(P)$ and $\check{M}:=\mu^{-1}(B)$. Then, the restriction of $\mu$ to $\check{M}$ forms a torus bundle. For a suitable coordinate, the metric on $\check{M}$ induced by that of $\check{X}$ expressed as
\begin{equation*}
g_{\check{M}} = \sum g^{ij}(dx_idx_j+dy_idy_j)
\end{equation*}
where $\check{x}=(x_1,\cdots,x_n)$ are the base coordinates and $\check{y}=(y_1,\cdots,y_n)$ are the fiber coordinates. By this expression, we obtain the metric $g_B$ on $B$ induced by that on $\check{M}$ and a diffeomorphism $TB\cong T^*B$. 
 Using the metric $g_B$ on $B$, we define the dual coordinates as follows: 
since $\sum_{j=1}^ng_{ij}dx^j$ is a closed form, there exists a function $x_i:=\phi_i$ of $x$ for each $i=1,\dots,n$ such that
\begin{equation*}
dx_i=\sum_{j=1}^ng_{ij}dx^j,
\end{equation*}
where $\{g_{ij}\}$ is the inverse matrix of $\{g^{ij}\}.$ Thus, we obtain the dual coordinates $x=(x_1,\dots,x_n)^t$ and the K\"ahler form $\omega_{TB}$ on $TB$ expressed by
\begin{equation*}
\omega_{TB}=\sum_{ij}g_{ij}dx^i\wedge dy^j. 
\end{equation*}
We assume that $B$ is a tropical affine manifold, i.e., the coordinate transformations are in $\R\rtimes GL_n(\Z)$. Then, by the fiberwise $\Z^n$-actions
\begin{equation*}
y\mapsto y+2\pi I,\ \ \ \check{y}\mapsto \check{y}+2\pi I
\end{equation*}
where $y$ is the fiber coordinate of $T^*B$ and $I\in\Z^n$, the quotients of $TB$ and $T^*B$ form  the torus bundles $TB/2\pi\Z^n$ and $T^*B/2\pi\Z^n$. We have $\check{M}\cong T^*B/2\pi\Z^n$ and define $M:=TB/2\pi\Z^n$. Moreover, the K\"ahler form $\omega_M$ on $M$ is induced by $\omega_{TB}$. Therefore, we obtain the two torus bundles (see Figure \ref{tfdtf}).
\begin{figure}[h]
\begin{equation*}
  \xymatrix{
   M \ar[rd]& & \check{M} \ar[ld] \\
   & B &  \\   
  } 
\end{equation*}
\caption{The torus fibration and the dual torus fibration}
\label{tfdtf}
\end{figure}

\subsection{Lagrangian sections of $M$ and Holomorphic line bundles over $\check{M}$.}\label{lag-line}
Let $\underline{s}:B\to M$ be a section of $M \to B$. Locally, we may regard $\underline{s}$ as a section of $TB \simeq T^*B$. Then, $\underline{s}$ is locally described by a collection of functions as 
\begin{equation*}
y^i = s^i(x).
\end{equation*}
\par
Now, under our assumption $TB\simeq T^*B$, we can check whether the graph of a section $\underline{s}:B\to M$ is Lagrangian or not in $T^*B$. The section $\underline{s}:B\to M$ is regarded as a section of $T^*B$ by setting $y_i=\sum_{j=1}^ng_{ij}y^j=\sum_{j=1}^ng_{ij}s^j$, from which one has
\begin{equation*}
\sum_{i=1}^n y_idx^i = \sum_{i=1}^n\left(\sum_{j=1}^ng_{ij}s^j\right)dx^i = \sum_{j=1}^ns^jdx_j.
\end{equation*}
Thus, the graph of the section $\underline{s}:B\to M$ is Lagrangian if and only if there exists a function $f$ such that $df=\sum_{j=1}^ns^jdx_j$ locally. The gradient vector field of such a function $f$ is of the form:
\begin{equation}\label{gra}
\grad(f) = \sum_{i,j}\frac{\del f}{\del x^j}g^{ji}\frac{\del}{\del x^i} = \sum_i\frac{\del f}{\del x_i}\frac{\del}{\del x^i}.
\end{equation}

Next, we define a line bundle $V$ with a $U(1)$-connection on the mirror manifold $\check{M}$ associated to $\underline{s}$. We set the covariant derivative locally as
\begin{equation*}
D := d - \frac{\ii}{2\pi}\sum_{i=1}^ns^i(x)dy_i.
\end{equation*}
By choosing suitable transition functions, the covariant derivative $D$ is defined globally. Moreover, the condition that $D$ defines a holomorphic line bundle on $\check{M}$ is equivalent to that the graph of $\underline{s}$ is Lagrangian in $M$. 

\subsection{DG categories $\cV$ and $DG(\check{X})$ of holomorphic line bundles.}\label{lineDG}
First, we define the DG category $\cV$ associated with $\check{M}$. The objects are holomorphic line bundles $V$ with $U(1)$-connections
\begin{equation}\label{connection}
D := d - \frac{\ii}{2\pi}\sum_{i=1}^ny^i(x)dy_i.
\end{equation}
These line bundles $(V,D)$ are associated to the Lagrangian section $\underline{y}:B\to M$. For two objects $(V_a,D_a),(V_b,D_b)\in\cV$, the space $\cV((V_a,D_a),(V_b,D_b))$ of morphisms is defined by
\begin{equation*}
\cV((V_a,D_a),(V_b,D_b)) := \Gamma(V_a,V_b)\otimes_{C^\infty(\check{M})}\Omega^{0,*}(\check{M}),
\end{equation*}
where $\Gamma(V_a,V_b)$ is the space of homomorphisms from $V_a$ to $V_b$ and $\Omega^{0,*}(\check{M})$ is the space of anti-holomorphic differential forms on $\check{M}$. The space $\cV((V_a,D_a),(V_b,D_b))$ is a $\Z$-graded vector space whose $\Z$-grading is defined by the degree of the anti-holomorphic differential forms. Let  $\cV^r((V_a,D_a),(V_b,D_b))$ denote the degree $r$ part. We define a linear map $d_{ab}:\cV^r((V_a,D_a),(V_b,D_b))\to\cV^{r+1}((V_a,D_a),(V_b.D_b))$ as follows. We decompose $D_a$ into its holomorphic part and anti-holomorphic part $D_a= D_a^{(1,0)} + D_a^{(0,1)}$, and set a $d_a:=2D_a^{(0,1)}$. Then, for $\psi_{ab}\in\cV^r((V_a,D_a),(V_b,D_b))$, we set
\begin{equation*}
d_{ab}(\psi_{ab}) := d_b\psi_{ab} -(-1)^r\psi_{ab}d_a.
\end{equation*}
This linear map $d_{ab}:\cV^r((V_a,D_a),(V_b.D_b))\to\cV^{r+1}((V_a,D_a),(V_b,D_b))$ satisfies $d_{ab}^2=0$ since $(V_a,D_a)$ and $(V_b,D_b)$ are holomorphic line bundles. The product structure $m:\cV((V_a,D_a),(V_b,D_b))\otimes\cV((V_b,D_b),(V_c,D_c))\to\cV((V_a,D_a),(V_{c},D_{c}))$ is defined by the composition of homomorphisms of line bundles together with the wedge product for the anti-holomorphic differential forms.\par
Next, we define the DG category $DG(\check{X})$ consisting of holomorphic line bundles on the toric manifold $\check{X}$. For a line bundle $V$ on $\check{X}$, we take a connection $D$, which defines a holomorphic structure on $V$. We set the objects of $DG(\check{X})$ as holomorphic line bundles $(V,D)$ on $\check{X}$ whose restriction to $\check{M}$ is isomorphic to a line bundle on $\check{M}$ with a connection of the form
\begin{equation*}
d-\frac{\ii}{2\pi}\sum_{i=1}^n y^i(x)dy_i.
\end{equation*}
The space $DG(\check{X})((V_a,D_a),(V_b,D_b))$ of morphisms is defined as the $\Z$-graded vector space whose degree $r$ part is given by
\begin{equation*}
DG^r(\check{X})((V_a,D_a),(V_b,D_b)) := \Gamma(V_a,V_b)\otimes_{C^\infty(\check{X})}\Omega^{0,r}(\check{X}),
\end{equation*}
where $\Gamma(V_a,V_b)$ is the space of smooth bundle maps from $V_a$ to $V_b$. The composition of morphisms and the DG structure is defined in a similar way as that in $\cV$ above. \par
We then have a faithful embedding $\cI:DG(\check{X})\to\cV$ by restricting the line bundles on $\check{X}$ to $\check{M}$. Denote by $\cV'$ the image $\cI(DG(\check{X}))$ of $DG(\check{X})$ under $\cI$.
The morphisms of the subcategory $\cV'$ have the boundary condition induced by the smoothness of the morphisms of the category $DG(\check{X})$ on the toric divisors.

\subsection{The category $Mo(P)$ of weighted Morse homotopy.}\label{Mo(P)}
Next, we consider the symplectic side.
We review the category $Mo(P)$ of Morse homotopy defined in \cite{fut-kaj1}.
\begin{itemize}
\item The objects : The objects are the Lagrangian sections $L$ of $\pi:M\to B$ which correspond to objects of $\cI(DG(\check{X}))\subset\cV$ described in subsection \ref{lineDG}\footnote{We can also interpret this condition as a condition of potential functions $f_L$ called the {\em growth condition} defined in \cite[Definition 3.1]{Chan}.}. We can extend $L$ to a section on $P$ smoothly. Note that there exists a function $f_L$ on $B$ such that $L$ is the graph of $df_L$ under the identification $TB\cong T^*B$ where the Lagrangian section $L$ is regarded as the gradient vector field $\sum\frac{\del f}{\del x_i}\frac{\del}{\del x^i}=\grad(f_L)\in\Gamma(TB)$ of $f_L$ as we see in (\ref{gra}).
\item The space of morphisms :  Given two objects $L,L'\in Mo(P)$, we assume that $L$ and $L'$ intersect cleanly. This means that there exists an open set $\tilde{B}$ such that $P\subset\tilde{B}$ and $L,L$ over $B$ can be extended to the graphs of smooth sections over $\tilde{B}$ so that they intersect cleanly.
 The space $Mo(P)(L,L')$ is the $\Z$-grading vector space spanned by the connected components $V$ of $\pi(L\cap L')\subset P$ which satisfy the following conditions:
\begin{description}
\item[(M1)] For each connected component $V\subseteq\pi(L\cap L')$, the dimension of the stable manifold $S_v\subset\tilde{B}$ of the gradient vector field $-\grad(f_L-f_{L'})$ at a generic point $v\in V$ is constant. Then we define the degree of $V$ by $|V|:=\dim(S_v)$
\item[(M2)] There exists a point $v\in V$ which is generic and is an interior point of $S_v\cap P\subset S_v$.
\end{description}

\item $A_\infty$-structure : In general, $Mo(P)$ is defined as an $A_\infty$-category, but the higher products $\fm_k\ (k\geq3)$ of the full subcategory $Mo_{\cE_c}(P)$ considered in subsection \ref{morse homotopy} vanish. 
This is why we only explain $\fm_1$ and $\fm_2$ here. For more details of the definition of the higher products $\fm_k$, see \cite{fut-kaj1}. 
Firstly, we take two Lagrangians $L,L'$ and connected components of the intersections $V,V'\subseteq \pi(L\cap L')$. Let $\mathcal{GT}(v;v')$ be the set of the gradient trees starting at $v\in V$ and ending at $v'\in V'$. We define $\mathcal{GT}(V;V') := \cup_{{v}\in V,v'\in V'}\mathcal{GT}(v;v')$ and denote $\mathcal{HGT}(V;V')$ its quotient by smooth homotopy. This set becomes a finite set when $|V'|=|V|+1$ and therefore we define the differential $\fm_1$ of morphisms by
\begin{equation*}
\begin{split}
\fm_1:&Mo(P)(L,L')\longrightarrow Mo(P)(L,L')\\
&V \longmapsto \sum_{\substack{V'\in Mo(P)(L,L')\\|V'|=|V|+1}}\sum_{[\gamma]\in \mathcal{HGT}(V;V')}e^{-A(\Gamma)}V',
\end{split}
\end{equation*}
where $A(\Gamma)\in[0,\infty]$ is the symplectic area of the piecewise smooth disc in $\pi^{-1}(\Gamma(T))$.
Secondly, take a triple $(L_1,L_2,L_3)$, connected components of the intersections $V_{12}\subseteq \pi(L_1\cap L_2), V_{23}\subseteq\pi(L_2\cap L_3), V_{13}\subseteq\pi(L_1\cap L_3)$. Let $\mathcal{GT}(v_{12},v_{23};v_{13})$ be the set of the trivalent gradient trees starting at $v_{12}\in V_{12}, v_{23}\in V_{23}$ and ending at $v_{13}\in V_{13}$. Define $\mathcal{GT}(V_{12},V_{23};V_{13}) := \cup_{v_{12}\in V_{12},v_{23}\in V_{23},v_{13}\in V_{13}}\mathcal{GT}(v_{12},v_{23};v_{13})$ and $\mathcal{HGT}(V_{12},V_{23};V_{13}):=\mathcal{GT}(V_{12},V_{23};V_{13})/\mathrm{smooth\ homotopy}$. This set become a finite set when $|V_{13}|=|V_{12}|+|V_{23}|$ and therefore we define the composition $\fm_2$ of morphisms by
\begin{equation*}
\begin{split}
\fm_2:&Mo(P)(L_1,L_2)\otimes Mo(P)(L_2,L_3)\longrightarrow Mo(P)(L_1,L_3)\\
&(V_{12},V_{23}) \longmapsto \sum_{\substack{V_{13}\in Mo(P)(L_1,L_3)\\|V_{13}|=|V_{12}|+|V_{23}|}}\sum_{[\gamma]\in \mathcal{HGT}(V_{12},V_{23};V_{13})}e^{-A(\Gamma)}V_{13},
\end{split}
\end{equation*}
where $A(\Gamma)\in[0,\infty]$ is the symplectic area of the piecewise smooth disc in $\pi^{-1}(\Gamma(T))$.
\end{itemize}

\subsection{Equivalence of DG-categories.}\label{plan}
For Hirzebruch surfaces $\Fk$, the derived category $D^b(Coh(\Fk))$ of coherent sheaves has a full strongly exceptional collection $\cE_c$ of line bundles\cite{hille-perling11,EL}. This means that $\cE_c$ generates $D^b(Coh(\Fk))$ in the sense that
\begin{equation*}
D^b(Coh(\Fk)) \simeq Tr(DG_{\cE_c}(\Fk)),
\end{equation*}
where $DG_{\cE_c}(\Fk)$ is the full DG subcategory of $DG(\Fk)$ consisting of $\cE_c$ and $Tr$ is the twisted complexes construction \cite{BK,kon94}. Also, we have a DG-quasi-isomorphism
\begin{equation*}
DG_{\cE_c}E(\Fk)\overset{\sim}\to\cV_{\cE_c}'=\cI(DG_{\cE_c}(\Fk)),
\end{equation*}
where $\cI:DG(\Fk)\to\cV$ is the faithful functor defined in subsection \ref{lineDG}. We denote the collection of the Lagrangian sections corresponding to the exceptional collection $\cE_c$ by the same symbol $\cE_c$. We denote by $Mo_{\cE_c}(P)$ the full subcategory of $Mo(P)$ consisting of $\cE_c$. Then the main result of this paper is the following:
\begin{thm}\label{main}
Let $\Fk$ be the Hirzebruch surface, $P$ be the moment polytope of $\Fk$, and $\cE_c$ be the full strongly exceptional collection of the derived category $D^b(Coh(\Fk))$ of coherent sheaves on $\Fk$.
Then we have a DG-quasi-isomorphisms
\begin{equation*}
Mo_{\cE_c}(P) \simeq \cV_{\cE_c}' \simeq DG_{\cE_c}(\Fk),
\end{equation*}
where $Mo_{\cE_c}(P)$ is the full subcategory of $Mo(P)$ consisting of the collection of the Lagrangian sections mirror to $\cE_c$.
\end{thm}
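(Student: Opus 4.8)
The plan is to handle the two quasi-isomorphisms in turn. The right-hand one, $\cV'_{\cE_c}\simeq DG_{\cE_c}(\Fk)$, is essentially what is already recorded in Subsection \ref{plan}: by definition $\cV'_{\cE_c}=\cI(DG_{\cE_c}(\Fk))$ with $\cI$ faithful, and on the strongly exceptional collection the boundary conditions built into the morphisms of $\cV'$ cut the $\del$-cohomology on $\check M$ down to exactly the classes coming from $\check X$, so that $\cI$ restricts to a DG-quasi-isomorphism. Hence the real content is the left-hand equivalence $Mo_{\cE_c}(P)\simeq\cV'_{\cE_c}$, and the strategy is to construct an explicit DG-functor $\cF\colon Mo_{\cE_c}(P)\to\cV'_{\cE_c}$ and check that it induces an isomorphism on the cohomology of every $\mathrm{Hom}$-complex.

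First I would fix the standard full strongly exceptional collection $\cE_c=(E_0,E_1,E_2,E_3)$ of line bundles on $\Fk$ of \cite{hille-perling11,EL}, compute the moment polytope $P$ of $\Fk$ (a trapezoid whose shape depends on $k$), and attach to each $E_a$, via the construction of Subsection \ref{lag-line}, a potential function $f_a$ on $B=\Int(P)$ determined by the piecewise-linear support function of $E_a$, together with the Lagrangian section $L_a=\mathrm{graph}(df_a)\subset M$; these $L_a$ are the objects of $Mo_{\cE_c}(P)$. One checks that any two of the $L_a$ can be extended over a common $\tilde B\supset P$ so as to intersect cleanly and that every connected component of $\pi(L_a\cap L_b)$ satisfies (M1) and (M2), so that $Mo_{\cE_c}(P)$ is well defined. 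Next one computes the $\mathrm{Hom}$-complexes on both sides. On the holomorphic side $H^r(\cV'(V_a,V_b),d_{ab})\cong\mathrm{Ext}^r_{\Fk}(E_a,E_b)$, which by strong exceptionality is concentrated in degree $0$, vanishes for $a>b$, equals $\C$ for $a=b$, and for $a<b$ has the monomial basis $H^0(\Fk,E_a^\vee\otimes E_b)$ indexed by the lattice points of the associated polytope. On the Morse side one identifies $\pi(L_a\cap L_b)$ explicitly: for $a<b$ its degree-$0$ connected components are again indexed by these lattice points, while the positive-degree components, and all components occurring for $a>b$, are either $\fm_1$-exact or carry infinite symplectic area (hence weight $e^{-A(\Gamma)}=0$), and for $a=b$ the single component $P$ contributes $H^*(P)=\C$ in degree $0$. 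Thus $(Mo_{\cE_c}(P)(L_a,L_b),\fm_1)$ and $(\cV'_{\cE_c}(V_a,V_b),d_{ab})$ have canonically isomorphic cohomology, matched lattice point by lattice point.

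Now define $\cF$ by $L_a\mapsto V_a$ on objects and by sending the generator attached to a lattice point $p$ to an explicit $\del$-closed representative of the matching monomial class; by the previous step $\cF$ is a quasi-isomorphism on each $\mathrm{Hom}$-complex. It remains to check compatibility with the products, $\cF(\fm_2(V_{12},V_{23}))=m(\cF(V_{12}),\cF(V_{23}))$, which reduces to the combinatorial statement that whenever lattice points satisfy $p_{12}+p_{23}=p_{13}$ there is a unique trivalent gradient tree up to smooth homotopy and it has vanishing symplectic area, while for $p_{12}+p_{23}\neq p_{13}$ there is no tree of finite area. Since monomial multiplication is strictly associative, one checks, as in \cite{fut-kaj1,fut-kaj2}, that $\fm_k=0$ on $Mo_{\cE_c}(P)$ for $k\geq3$, so that $Mo_{\cE_c}(P)$ is a DG category and $\cF$ a DG-quasi-isomorphism; composing with $\cI$ then gives the chain of quasi-isomorphisms in the statement.

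The hard part will be the gradient-tree analysis for $k\geq2$. For the Fano case $\F_1$ the polytope is small and the potentials can be put in general position, but for $k\geq2$ the trapezoid is long and the differences $f_a-f_b$ develop flat, degenerate loci along $\del P$, so genericity cannot simply be invoked: one must establish directly the existence and uniqueness up to homotopy of the relevant gradient trees, compute their symplectic areas, rule out spurious trees emanating from the degenerate boundary critical points, and do all of this uniformly in $k$. A secondary technical point is to verify that the boundary conditions defining $\cV'$ translate, on the Morse side, into precisely the conditions (M1)--(M2) together with the choice of extension $\tilde B$, so that objects and morphism spaces correspond bijectively.
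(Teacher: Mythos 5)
Your overall architecture matches the paper's (treat $\cV'_{\cE_c}\simeq DG_{\cE_c}(\Fk)$ as coming from the faithful restriction functor $\cI$, then build a generator-by-generator correspondence $Mo_{\cE_c}(P)\to\cV'_{\cE_c}$ indexed by lattice points and check compatibility with $\fm_2$ via gradient trees), but there are two genuine gaps. First, your mechanism for killing the unwanted morphisms is wrong. You assert that \emph{every} connected component of $\pi(L_a\cap L_b)$ satisfies (M1) and (M2), and then dispose of the backward and positive-degree components by claiming they are $\fm_1$-exact or have infinite symplectic area. In the category $Mo(P)$ of the paper, components failing (M1)--(M2) are simply not generators; weights $e^{-A(\Gamma)}$ enter only in the $A_\infty$-operations and cannot remove a generator from a morphism space. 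Moreover the exactness claim fails concretely: for $(a,b)=(-1,0)$ the components $\pi(L(1,0)\cap L(0,0))$ are the entire edges $E_1$ and $E_3$, whose stable manifolds are one-dimensional; if these were admitted they would be degree-one generators with no degree-zero generators available to hit them, so the Morse-side cohomology would be nonzero while $\mathrm{Ext}^*_{\Fk}(\cO(1,0),\cO)=0$. The actual argument (and the only one that works here) is that these boundary components violate Condition (M2) (no generic point of $V$ is interior to $S_v\cap P$), so the backward morphism spaces are zero on the nose and the complexes on both sides are minimal; this is exactly what the paper's lemmas verify case by case, including the delicate case $c=0$, where $V_{(-1,1);(0,0)}$ and $V_{(-1,1);(k-1,0)}$ are \emph{disconnected} and their vertex pieces must be discarded by (M2) --- a phenomenon your ``degree-$0$ components are indexed by lattice points'' step does not account for.

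Second, the part you defer as ``the hard part'' is precisely the content of the paper's proof of compatibility with composition, and it is not supplied. The paper resolves it not by a genericity or general-position argument but by a structural observation: in $\cE_c$ every composition involves one morphism of type $b=0$ and one of type $b=1$; gradient trajectories attached to type-$0$ generators run along horizontal lines $x^2=\mathrm{const}$, and the type-$1$ generators lie on the edges $E_2$ or $E_4$, so the relevant trajectories run along those edges as well. This pins down a unique gradient tree (up to homotopy) with area $A(\gamma)=f_{(\alpha,\beta);I}(z)+f_{(\alpha',\beta');J}(z)$, which matches the product $e^{-f_I}e^{-f_J}=e^{-(f_I(v)+f_J(v))}\,\ee_{(\alpha+\alpha',\beta+\beta');I+J}$ on the $\cV'$ side because $z$ is the minimum point $v$ of $f_I+f_J$. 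Without this (or an equivalent) argument, uniform in $k$, your functor $\cF$ is only shown to be a quasi-isomorphism on objects and on cohomology of $\mathrm{Hom}$'s, not a DG-functor, so the theorem is not yet proved. (A minor further inaccuracy: the potentials are the smooth functions $\pi a\log(1+s)+\pi b\log(1+s^k+t)$ produced by the SYZ metric, not piecewise-linear support functions; the smooth form is what makes the horizontal-trajectory argument and the area computation work.)
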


Futaki and Kajiura proved this theorem for $\F_1$ in \cite{fut-kaj2}. In this paper, we extend the result to any positive integer $k\in\Z_{\geq0}$.
\begin{rem}
If $k=2$ then the surface $\F_2$ is weak Fano, but if $k\geq3$ then the surface $\Fk$ is not even weak Fano. In other words, our results are examples of homological mirror symmetry of non-Fano cases.
\end{rem}
The next corollary immediately follows from the main theorem (Theorem \ref{main}) because the triangulated categories induced from DG-quasi-isomorphic categories are isomorphic as triangulated categories.

\begin{cor}\label{main cor}
For the Hirzebruch surfaces $\Fk$, we have an equivalence of triangulated categories
\begin{equation*}
Tr(Mo_{\cE_c}(P)) \simeq D^b(Coh(\Fk)),
\end{equation*}
where $P$ is the moment polytope of $\Fk$ and $\cE_c$ is the collection of the Lagrangian sections mirror to the full strongly exceptional collection of the holomorphic line bundles on $\Fk$.
\end{cor}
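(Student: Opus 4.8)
Both statements reduce to the main theorem: Corollary \ref{main cor} follows by applying $Tr$ to the DG-quasi-isomorphism of Theorem \ref{main} (since $Tr$ sends DG-quasi-isomorphic categories to triangle-equivalent ones and $Tr(DG_{\cE_c}(\Fk))\simeq D^b(Coh(\Fk))$ by the fullness of $\cE_c$ recalled in \S\ref{plan}), so I sketch the proof of Theorem \ref{main}. In the chain $Mo_{\cE_c}(P)\simeq\cV_{\cE_c}'\simeq DG_{\cE_c}(\Fk)$, the second quasi-isomorphism is supplied by the faithful functor $\cI$ of \S\ref{lineDG}, so the content is the first one, $Mo_{\cE_c}(P)\simeq\cV_{\cE_c}'$. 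The plan is: (i) fix an explicit full strongly exceptional collection $\cE_c=(E_0,\dots,E_3)$ of line bundles on $\Fk$ (Hille--Perling) together with the mirror Lagrangian sections $L_0,\dots,L_3$, described by explicit piecewise-linear potential functions $f_0,\dots,f_3$ on $B$ (essentially the support functions of the $E_i$) via the SYZ construction of section 3; (ii) match morphism spaces and gradings on the two sides; (iii) construct a DG functor $F\colon Mo_{\cE_c}(P)\to\cV_{\cE_c}'$ and show it is a quasi-isomorphism, the product compatibility being the crux.

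For (ii), I would compute, for each ordered pair $L_a,L_b$, the connected components $V$ of $\pi(L_a\cap L_b)$ and select those obeying (M1) and (M2). Because $\cE_c$ is \emph{strongly} exceptional, on the $B$-model side $\mathrm{Hom}^\bullet(E_a,E_b)$ is concentrated in degree $0$ for $a\le b$ and vanishes for $a>b$; I would verify the mirror statement directly from $f_a-f_b$, namely that for $a\le b$ the admissible components are points of degree $|V|=0$, in bijection with the lattice points indexing a basis of $H^0$ of the corresponding line bundle, while for $a>b$ there is no admissible component. This forces $\fm_1=0$ on $Mo_{\cE_c}(P)$ and, everything being in degree $0$, $\fm_k=0$ for all $k\ge3$, so $Mo_{\cE_c}(P)$ is an honest DG (indeed graded) category and $F$ may be taken to be a genuine DG functor: on objects it is the SYZ correspondence of \S\ref{lag-line}, and it sends a generator $V\subseteq\pi(L_a\cap L_b)$ to the distinguished ``monomial'' element of $\cV_{\cE_c}'((V_a,D_a),(V_b,D_b))$, the holomorphic (hence $d_{ab}$-closed) section pinned down by the lattice point attached to $V$.

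The main step is (iii): that $F$ intertwines $\fm_2$ with the composition in $\cV_{\cE_c}'$ on the nose (equivalently, with the Yoneda product in the Beilinson-type quiver algebra $\mathrm{End}(\bigoplus_i E_i)$). Concretely, for generators $V_{12}\subseteq\pi(L_1\cap L_2)$, $V_{23}\subseteq\pi(L_2\cap L_3)$, $V_{13}\subseteq\pi(L_1\cap L_3)$ with $|V_{13}|=|V_{12}|+|V_{23}|$ (all $=0$), I must show there is exactly one homotopy class of trivalent gradient tree in $\mathcal{HGT}(V_{12},V_{23};V_{13})$ precisely when the lattice points add up, $m_{12}+m_{23}=m_{13}$, and that its piecewise-smooth disc has symplectic area $0$, so $e^{-A(\Gamma)}=1$, while for all other triples either there is no tree or every tree has $A(\Gamma)=\infty$. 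Here the explicit convex combinatorics of the trapezoidal moment polytope $P$ of $\Fk$ enters: a gradient tree for $-\grad(f_1-f_2),-\grad(f_2-f_3),-\grad(f_1-f_3)$ is constrained to run along the affine walls of these piecewise-linear functions, and a convexity argument gives existence and uniqueness of the ``straight'' tree and forces every other tree to escape toward $\partial P$ with infinite area. Assembling (ii) and this step shows $F$ is a DG functor inducing isomorphisms on all morphism cohomologies, hence a DG-quasi-isomorphism; composing with $\cI$ gives Theorem \ref{main}, and then Corollary \ref{main cor} follows by applying $Tr$.

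The hard part will be this gradient-tree analysis for $k\ge2$. In the (weak) Fano range $k\in\{0,1,2\}$ the polytope is reflexive or nearly so and the breaklines of the $f_i$ are tame, essentially as in the $\F_1$ case treated by Futaki--Kajiura; but for $k\ge3$ the trapezoid is far from reflexive, the potentials acquire several affine pieces, and gradient trajectories can bend at interior walls, so one must rule out spurious gradient trees and, just as importantly, spurious admissible intersection components lying in or near $\partial P$. For the latter the condition (M2) -- that the stable manifold meet $\Int(P)$ -- must be shown to mirror exactly the boundary condition that the morphisms of $\cV_{\cE_c}'$ inherit from $DG(\Fk)$ along the toric divisors. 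A subsidiary point is to check that the $L_i$ can be perturbed to intersect cleanly, as required in \S\ref{Mo(P)}, without disturbing any of these counts.
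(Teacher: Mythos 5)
Your derivation of the corollary itself is exactly the paper's: it follows immediately from Theorem \ref{main} by applying $Tr$, since $Tr$ takes DG-quasi-isomorphic categories to equivalent triangulated categories and $Tr(DG_{\cE_c}(\Fk))\simeq D^b(Coh(\Fk))$ because $\cE_c$ is a full strongly exceptional collection, as recalled in \S\ref{plan}. The appended sketch of Theorem \ref{main} is not needed for this statement and deviates in minor details from the paper's actual argument (the potentials $f_{(a,b)}$ are smooth logarithmic functions rather than piecewise-linear support functions, and some admissible degree-zero generators are segments or whole edges of $\del P$ rather than points), but this does not affect the correctness of your proof of the corollary.
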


\section{Homological mirror symmetry of $\Fk$.}
In this section, we consider homological mirror symmetry for the Hirzebruch surface $\Fk$. We follow the convention of \cite{fut-kaj2}.
In subsection \ref{Fk}, we review the setting of the SYZ construction for $\Fk$ in \cite{fut-kaj2}. In subsection \ref{DGFk}, we discuss DG-categories consisting of holomorphic line bundles and consider a full strongly exceptional collection of $D^b(Coh(\Fk))$. In subsection \ref{morse homotopy}, we construct the Lagrangian sections which are SYZ mirror dual to the holomorphic line bundles in $DG(\Fk)$ and prove the main theorem (Theorem \ref{main}) for $\Fk$. In subsection \ref{minimality}, we give an example of the space of morphisms which is not minimal.

\subsection{Hirzebruch surface $\Fk$.}\label{Fk}
The Hirzebruch surface $\Fk$ is defined by
\begin{equation*}
\Fk :=\left\{ ([s_0:s_1],[t_0:t_1:t_2])\ \middle|\ (s_0)^kt_0=(s_1)^kt_1 \right\} \subset \C P^1\times\C P^2.
\end{equation*}
We take the following open covering $\{U_i\}_i$: 
\begin{align*}
U_1  =  \left\{([s_0:s_1],[t_0:t_1:t_2]) \in \Fk\ \middle|\ t_1 \neq 0\ ,\ s_0 \neq 0\right\},\\
U_2  =  \left\{([s_0:s_1],[t_0:t_1:t_2]) \in \Fk\ \middle|\ t_0 \neq 0\ ,\ s_1 \neq 0\right\},\\
U_3  =  \left\{([s_0:s_1],[t_0:t_1:t_2]) \in \Fk\ \middle|\ t_2 \neq 0\ ,\ s_0 \neq 0\right\},\\
U_4  =  \left\{([s_0:s_1],[t_0:t_1:t_2]) \in \Fk\ \middle|\ t_2 \neq 0\ ,\ s_1 \neq 0\right\}.
\end{align*}
The local coordinates in $U_2$ is $(u,v) := \left(\frac{s_0}{s_1},\frac{t_2}{t_0}\right)$. Hereafter, we fix $U:=U_2$.
There exists natural projections $\pr_1:\Fk\to\C P^1$ and $\pr_2:\Fk\to\C P^2$.
Using these projections, we define the K\"ahler form of $\Fk$ by
\begin{equation*}
\check{\omega}:= C_1\pr_1^*\left(\omega_{\C P^1}\right) + C_2\pr_2^*\left(\omega_{\C P^2}\right),
\end{equation*}
where $C_1 > 0$ and $C_2 > 0$ are real constants and $\omega_{\C P^n}$ is the Fubini-Study form on $\C P^n$. Correspondingly, the moment map $\mu:\Fk\to \R^2$ is given by
\begin{equation*}
\mu([s_0:s_1],[t_0:t_1:t_2]) :=\left(\frac{2C_1|s_1|^2}{|s_0|^2+|s_1|^2}+k\frac{2C_2|t_1|^2}{|t_0|^2+|t_1|^2+|t_2|^2}, \frac{2C_2|t_2|^2}{|t_0|^2+|t_1|^2+|t_2|^2}\right).
\end{equation*}
The image $\mu(\Fk)$ is called the moment polytope associated to $\check{\omega}$, which is the trapezoid. Namely, the moment polytope is
\begin{equation*}
P:=\left\{(x^1,x^2)\in\R^2\ \middle|\ 0\leq x^1 \leq 2(C_1+kC_2)-kx^2,\ 0\leq x^2 \leq 2C_2\right\}.
\end{equation*}
We denote each edge of $P$ by $E_i$  (see Figure \ref{poly1}).

\begin{figure}[h]
\center
\begin{tikzpicture}
\draw(0,0)--node[auto=left]{$E_2$}  (12,0)-- node[auto=left]{$E_3$} (4,4)--node[auto=left]{$E_4$}  (0,4)--node[auto=left]{$E_1$}  cycle;
\end{tikzpicture}
\caption{The moment polytope of $\Fk$.}
\label{poly1}
\end{figure}

Now, we set
\begin{equation*}
\check{M}:=U_0\cap U_1\cap U_2\cap U_3,\ \ \ B:=\mathrm{Int}P,
\end{equation*}
and we treat $\check{M}$ as a torus fibration $\mu|_{\check{M}}:\check{M}\to B$. Then, $\check{M}$ is equipped with an affine structure by $u=e^{x_1+\ii y_1}$ and $v=e^{x_2+\ii y_2}$, where $y_1$ and $y_2$ are the fiber coordinates of $\check{M}$. The K\"ahler form $\check{\omega}$ is expressed as
\begin{equation*}
\begin{split}
\check{\omega}  &= 4 C_2\frac{k^2(1+t)s^kdx_1\wedge dy_1 -ks^ktdx_1\wedge dy_2 - ks^ktdx_2\wedge dy_1 +(1+s^k)tdx_2\wedge dy_2}{(1+s^k+t)^2}\\
 &\ \ \ +4 C_1\frac{sdx_1\wedge dy_1}{(1+s)^2},
\end{split}
\end{equation*} 
 on $U$, where $s:=|u|^2 = e^{2x_1}$ and $t:=|v^2|= e^{2x_2}$. By this expression, the inverse matrix $\{g^{ij}\}$ of the metric $\{g_{ij}\}$ on $B$ is given by
\begin{equation*}
\begin{pmatrix}
g^{11} & g^{12}\\
g^{21} & g^{22}
\end{pmatrix}
=
\begin{pmatrix}
C_1\frac{4 s}{(1+s)^2} + C_2\frac{4k^2(1+t)s^k}{(1+s^k+t)^2} & C_2\frac{-4 ks^kt}{(1+s^k+t)^2} \\
C_2\frac{-4 ks^kt}{(1+s^k+t)^2} & C_2\frac{4(1+s^k)t}{(1+s^k+t)^2}
\end{pmatrix}.
\end{equation*}
Now, we put $\psi:=C_2\log(1+e^{2k x_1}+e^{2 x_2}) + C_1\log(1+e^{2 x_1})$ and then $\frac{\del^2 \psi}{\del x_i\del x_j}=g^{ij}$. Thus, the dual coordinate $(x^1,x^2)$ is obtained by
\begin{equation*}
\begin{split}
(x^1,x^2)
&:=
\left(\frac{\del\psi}{\del x_1} , \frac{\del\psi}{\del x_2}\right)\\
&=
\left(C_1\frac{2e^{2 x_1}}{1+e^{2 x_1}} + C_2k\frac{2e^{2k x_1}}{1+e^{2k x_1}+e^{2 x_2}}, C_2\frac{2e^{2 x_2}}{1+e^{2k x_1}+e^{2 x_2}}\right)\\
&=
\mu\left([e^{x_1+\ii y_1}:1],[1:e^{k(x_1+\ii y_1)}:e^{x_2+\ii y_2}]\right).
\end{split}
\end{equation*}
For simplicity, we fix $C_1=C_2=1$ since the structure of the category $Mo(P)$ we shall construct is independent of these constants.
Hereafter, we regard $M$ as the dual torus fibration of $\mu|_{\check{M}}:\check{M}\to B$.

\subsection{The category of complex side.}\label{DGFk}
Any line bundle over $\Fk$ is constructed from the toric divisors, which is a linear combination of the following divisors:
\begin{equation*}
 D_{12}=(t_2=0),\   D_{24}=(s_0=t_1=0),\   D_{13}=(s_1=t_0=0),\ 
 D_{34}=(t_0=t_1=0). 
\end{equation*}
Using these divisors, we can identify $\pi_1^*\cO_{\C P^1}(1)=\cO(D_{24})$ and $\pi_2^*\cO_{\C P^2}(1)=\cO(D_{12})$.
 Any line bundle over $\Fk$ is generated by $(D_{12},\ D_{24})$, so that we put 
 \begin{equation*}
 \cO(a,b):=\cO(aD_{24}+bD_{12}).
 \end{equation*}
A connection one-form of $\cO(a,b)$ is expressed as
 \begin{equation*}
A_{(a,b)}:=-a\frac{s(dx_1+\rt dy_1)}{1+s}  -b\frac{ks(dx_1+\rt dy_1)+t(dx_2+\rt dy_2)}{1+s^k+t}.
\end{equation*}
We consider $DG(\Fk)$ as defined in subsection 2.4, where the objects are the line bundles $\cO(a,b)$ with the connection one-form $A_{(a,b)}$. The DG structure of $DG(\Fk)$ is given by the way described in subsection \ref{lineDG}. Since each $\cO(a,b)$ is a line bundle, we have
\begin{equation*}
DG(\Fk)\left(\cO(a_1,b_1),\cO(a_2,b_2)\right)\simeq DG(\Fk)\left(\cO,\cO(a_2-a_1,b_2-b_1)\right).
\end{equation*}
In particular,  the zero-th cohomology of $DG(\Fk)(\cO(0,0),\cO(a,b))$ is the space $\Gamma(\Fk,\cO(a,b))$ of holomorphic global sections. 
\begin{equation*}
\begin{split}
H^0(DG(\Fk)(\cO(a_1,b_1),\cO(a_2,b_2))) &\simeq H^0(DG(\Fk)(\cO(0,0),\cO(a_2-a_1,b_2-b_1))\\
&\simeq \Gamma(\Fk,\cO(a_2-a_1,b_2-b_1)) .
\end{split}
\end{equation*}
Using the coordinates $(u,v)$ for $U$, the generators of $\Gamma(\Fk,\cO(a,b))$ are expressed explicitly as
\begin{equation*}
\psi_{(i_1,i_2)}:=u^{i_1}v^{i_2},
\end{equation*}
where $0\leq i_2\leq b$ and $0\leq i_1\leq a+k(b-i_2)$. For more details, see \cite{fulton93toric,coxtor,fut-kaj2}. \par
Next, we consider a full strongly exceptional collection of the derived category of coherent sheaves $D^b(Coh(\Fk))$. It is known (Hille-Perling \cite{hille-perling11} and Elagin-Lunts \cite{EL}) that
\begin{equation*}
\cE_c:=(\cO,\cO(1,0),\cO(c,1),\cO(c+1,1)).
\end{equation*}
with a fixed $c=0,1,2,\cdots$ forms a full strongly exceptional collection of $D^b(Coh(\Fk))$. We denote by $DG_{\cE_c}(\Fk)$ be the full subcategory of $DG(\Fk)$ consisting of $\cE_c$.

Let $\cV$ be a DG category of holomorphic line bundles over $\check{M}$. For $\cO(a,b)$, twisting the fibers of $\cO(a,b)$ by the isomorphisms $\Psi_{(a,b)}:=(1+s)^{\frac{a}{2}}(1+s^k+t)^{\frac{b}{2}}$, we can remove the $dx$ term from $A_{(a,b)}$ as follows:
\begin{equation*}
\Psi_{(a,b)}^{-1}(d+A_{(a,b)})\Psi_{(a,b)} = -a\frac{s\rt dy_1}{1+s}  -b\frac{ks\rt dy_1+t\rt dy_2}{1+s^k+t}
\end{equation*}
where $s=e^{2x_1}$ and $t=e^{2x_2}$.  Let $\widetilde{\cO}(a,b)$ denote the line bundle $\cO(a,b)$ twisted by $\Psi_{(a,b)}$. Then, the faithful functor $\cI:DG(\Fk)\to\cV$ assigns $\cO(a,b)$ to $\widetilde{\cO}(a,b)$. Also, each generator $\psi_{(i_1,i_2)}\in DG(\Fk)(\cO,\cO(a,b))$ is sent to be $\Psi^{-1}_{(a,b)}\psi_{(i_1,i_2)}\in\cV(\widetilde{\cO},\widetilde{\cO}(a,b))$, i.e.,
\begin{equation}\label{pre-e}
\begin{split}
\Psi^{-1}_{(a,b)}\psi_{(i_1,i_2)} &= (1+s)^{\frac{-a}{2}}(1+s^k+t)^{\frac{-b}{2}}u^{i_1}v^{i_2}\\
 &=(1+s)^\frac{-a}{2}(1 + s^k + t)^\frac{-b}{2}s^\frac{i_1}{2}t^\frac{i_2}{2}e^{\ii(i_1y_1 + i_2y_2)}.
\end{split}
\end{equation}
The images are denoted by $\cV':=\cI(DG(\Fk))$ and $\cV_\cE':=\cI(DG_\cE(\Fk))$, respectively. Then, the basis of $H^0(\cV(\widetilde{\cO},\widetilde{\cO}(a,b,c))$ are given by (\ref{pre-e}). If the functions $\Psi^{-1}_{(a,b)}\psi_{(i_1,i_2)}$ on $B$ extend to that on $P$ smoothly, then we rescale each basis $\Psi^{-1}_{(a,b)}\psi_{(i_1,i_2)}$ by multiplying a positive number and denote it by $\ee_{(a,b);(i_1,i_2)}$ so that
\begin{equation*}
\max_{x\in P}|\ee_{(a,b);(i_1,i_2)}(x)|=1.
\end{equation*}

\subsection{The category of symplectic side.}\label{morse homotopy}
We discuss the Lagrangian section $L(a,b)$ of the dual torus fibration $M\to B$ corresponding to the line bundle $\widetilde{\cO}(a,b)$. As we saw in subsection \ref{lag-line}, the Lagrangian section $L(a,b)$ corresponding to $\widetilde{\cO}(a,b)$ is obtained as the coefficients of $dy$ terms
\begin{equation}
\begin{pmatrix}
y^1 \\ y^2
\end{pmatrix}
=2\pi
\begin{pmatrix}
 a\frac{s}{1+s} +  bk\frac{s^k}{1+s^k+t} \\  b\frac{t}{1+s^k+t}
\end{pmatrix}.
\end{equation}
By this expression, we can extend $L(a,b)$ on $B$ to that on $P$ smoothly.
The potential function of this Lagrangian section is given by
\begin{equation}\label{fabc}
f =  \pi a\log(1+s) +  \pi b\log(1+s^k+t).
\end{equation}
The collection of the Lagrangian sections corresponding to the full strongly exceptional collection $\cE_c$ given in subsection \ref{DGFk} is denoted by the same symbol $\cE_c$, i.e.,
\begin{equation*}
\cE_c=\left(L(0,0),L(1,0),L(c,1),L(c+1,1)\right).
\end{equation*}\par
For the moment polytope $P$ of $\Fk$, we construct the full subcategory $Mo_{\cE_c}(P)\subset Mo(P)$ consisting of $\cE_c$, where $Mo(P)$ is defined in subsection \ref{Mo(P)}. The objects of $Mo(P)$ are Lagrangian sections $L(a,b)$ obtained above.  Since we have
\begin{equation*}
Mo(P)(L(a_1,b_1),L(a_2,b_2)) \simeq Mo(P)(L(0,0),L(a_2-a_1,b_2-b_1)),
\end{equation*}
we concentrate to computing the space $Mo_\cE(P)(L(0,0),L(a,b))$.
The intersections of $L(0,0)$ with $L(a,b)$ are expressed as
\begin{equation}\label{eq}
\begin{split}
2\pi
\begin{pmatrix}
i_1 \\ i_2
\end{pmatrix}
=
2\pi
\begin{pmatrix}
 a\frac{s}{1+s} +  bk\frac{s^k}{1+s^k+t} \\  b\frac{t}{1+s^k+t}
\end{pmatrix}
\end{split}
\end{equation} 
in the covering space of $\pi:\bar{M}\to P$, where $(i_1,i_2)\in\Z^2$, $s=e^{2x_1}$, $t=e^{2x_2}$, and $\bar{M}$ is a torus fibration formed by adding formally torus fibers on $\del P$ to the torus fibration $M\to B=\Int(P)$. If there exists a nonempty intersection, then we set $V_{(a,b);(i_1,i_2)}:=\pi(L(0,0)\cap L(a,b))$. We check  that $V_{(a,b);(i_1,i_2)}$ satisfies Conditions (M1) and (M2) given in subsection \ref{Mo(P)}. In this case, the gradient vector field associated to $V_{(a,b);(i_1,i_2)}$ is of the form
\begin{equation}\label{grad2}
2\pi\left(a\frac{s}{1+s} + bk\frac{s^k}{1+s^k+t} - i_1\right)\frac{\del}{\del x^1} + 2\pi\left(b\frac{t}{1+s^k+t} - i_2 \right)\frac{\del}{\del x^2}.
\end{equation}\par
On the other hand, for the space of the opposite directional morphisms, we have
\begin{equation*}
Mo(P)(L(a,b),L(0,0)) \cong Mo(P)(L(0,0),L(-a,-b)).
\end{equation*}
Thus, the connected component $V_{(a,b);I}$ coincides with the connected component $V_{(-a,-b);-I}$. The gradient vector field associated with $V_{(-a,-b);(-i_1,-i_2)}$ is the opposite direction of (\ref{grad2}).\par
\begin{figure}[h]
\center
\begin{tikzpicture}
\draw(0,0)--node[auto=left]{$E_2$} node[auto=right]{$t=0$} (12,0)-- node[auto=left]{$E_3$} (4,4)--node[auto=left]{$E_4$} node[auto=right]{$s\leq\infty,t=\infty$} (0,4)--node[auto=left]{$E_1$} node[auto=right]{$s=0$} cycle;
\draw(12,0)node[right]{$\frac{t}{s^k}=0$};
\draw(4,4)node[above right]{$\frac{t}{s^k}=\infty$};
\end{tikzpicture}
\caption{The moment polytope of $\Fk$.}
\label{poly2}
\end{figure}

The full strongly exceptional collections $\cE_c$ behave slightly differently when $c>0$ or $c=0$. We first consider the case of $c>0$. We will discuss the case of $c=0$ later.\par
We first discuss that when there exists a nonempty intersection of $L(a,b)$ with $L(0,0)$ in the covering space $\bar{M}\to P$. For $a,b\geq0$, since $0\leq s,t\leq\infty$, we have
\begin{equation*}
0\leq i_2=b\frac{t}{1+s^k+t}\leq b,\ \ \ 0\leq i_1+ki_2 = a\frac{s}{1+s} + bk\frac{s^k+t}{1+s^k+t}\leq a+kb
\end{equation*}
By (\ref{eq}), $t$ expressed as
\begin{equation*}
t = \frac{(1+s^k)i_2}{b-i_2}
\end{equation*}
and the equation (\ref{eq}) expressed as
\begin{equation*}
i_1 = a\frac{s}{1+s} + k(b-i_2)\frac{s^k}{1+s^k}.
\end{equation*}
If $0\leq i_1\leq a+k(b-i_2)$ there exists a solution of this equation since a function $f(x)=a\frac{x}{1+x} + k(b-i_2)\frac{x^k}{1+x^k}$ is monotonically increasing. By solving $y^j=2\pi i_j,\ j=1,2,$ we obtain the following.

\begin{lem}
We assume $a,b\geq0$ and $(a,b)\neq(0,0)$. For any $(i_1,i_2)$ satisfying
\begin{equation*}
0\leq i_2\leq b,\ \ \ 0\leq i_1\leq a+k(b-i_2),
\end{equation*}
the intersection $V_{(a,b);I}$ is nonempty and connected.
\begin{itemize}
\item If $a+k(b-i_2)=0$, then the intersection $V_{(a,b);I}$ is
\begin{equation*}
V_{(0,b);(0,b)} =\left\{ \left( x^1 , 2 \right)\in P\ \middle|\ 0\leq x^1\leq 2\right\}.
\end{equation*}
Note that the condition $a+k(b-i_2)=0$ is satisfied only when $a=0$ and $i_2=b$, under the above assumption.

\item If $b=0$, then the intersection $V_{(a,0);I}$ is
\begin{equation*}
V_{(a,0);(i_1,0)} = \left\{(x^1,x^2)\in P\ \middle|\  x^1+k\frac{i_1^k}{i_1^k+(a-i_1)^k}x^2=\frac{2i_1}{a}+k\frac{2i_1^k}{i_1^k+(a-i_1)^k}\right\}.
\end{equation*}

\item If $b\neq0$ and $a+k(b-i_2)\neq0$, then the intersection $V_{(a,b);I}$ consists of the point such that
\begin{equation*}
V_{(a,b);(i_1,i_2)} = \left\{\left(\frac{2s_{(i_1,i_2)}}{1+s_{(i_1,i_2)}}+\frac{2k(b-i_2)s_{(i_1,i_2)}^k}{b(1+s_{(i_1,i_2)}^k)},\frac{2i_2}{b}\right)\right\},
\end{equation*}
where $s_{(i_1,i_2)}$ satisfies $i_1=a\frac{s_{(i_1,i_2)}}{1+s_{(i_1,i_2)}}+k(b-i_2)\frac{s_{(i_1,i_2)}^k}{1+s_{(i_1,i_2)}^k}$.
\end{itemize}
\end{lem}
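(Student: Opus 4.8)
The plan is to solve the system (\ref{eq}) explicitly, regarding $s=e^{2x_1}$ and $t=e^{2x_2}$ as ranging over $[0,\infty]$ (the extra values $0,\infty$ corresponding to the edges of $P$), and to read off the solution locus as a subset of $P$ via the dual-coordinate (moment) map. First I would record why the stated inequalities are the natural ones: the second equation of (\ref{eq}) gives $i_2=bt/(1+s^k+t)\in[0,b]$, and the sum of the two equations gives $i_1+ki_2=a\,s/(1+s)+bk(s^k+t)/(1+s^k+t)\in[0,a+kb]$, so $0\le i_1\le a+k(b-i_2)$; these are necessary, and the lemma asserts that they are also sufficient, with the intersection moreover connected and given by the displayed formulas.

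The main step is an elimination argument. When $i_2<b$, the second equation of (\ref{eq}) gives $1+s^k+t=(1+s^k)\,b/(b-i_2)$, hence $t=i_2(1+s^k)/(b-i_2)$ and $s^k/(1+s^k+t)=(b-i_2)s^k/\bigl(b(1+s^k)\bigr)$; substituting into the first equation reduces it to $i_1=F(s)$ with $F(s):=a\,s/(1+s)+k(b-i_2)\,s^k/(1+s^k)$. Since $s\mapsto s/(1+s)$ and $s\mapsto s^k/(1+s^k)$ are continuous and strictly increasing on $[0,\infty]$, and the hypothesis $a+k(b-i_2)>0$ forces at least one of $a$, $k(b-i_2)$ to be positive, $F$ is continuous and strictly increasing with $F(0)=0$, $F(\infty)=a+k(b-i_2)$. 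Hence for each admissible $i_1$ there is a unique $s=s_{(i_1,i_2)}\in[0,\infty]$ with $F(s)=i_1$; this proves nonemptiness, and since $t$ is then determined (with $t=0$ when $i_2=0$), the solution locus is a single point, hence connected. Feeding $s_{(i_1,i_2)}$ into $x^2=2t/(1+s^k+t)=2i_2/b$ and $x^1=2s/(1+s)+2ks^k/(1+s^k+t)$ gives exactly the expression in the third bullet; the boundary case $i_2=b$ with $a>0$ is the same with $t=\infty$ and $b-i_2=0$.

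It then remains to treat the two degenerate strata. If $b=0$, the second equation is automatic ($i_2=0$) and the first only fixes $s$ through $i_1=a\,s/(1+s)$, i.e.\ $s=i_1/(a-i_1)$ (read as $0$, resp.\ $\infty$, at the endpoints $i_1=0,a$), while $t\in[0,\infty]$ stays free; eliminating $t$ from the coordinate map using $s^k/(1+s^k+t)=s^k(2-x^2)/\bigl(2(1+s^k)\bigr)$ turns the locus into the affine line $x^1+k\,i_1^k/\bigl(i_1^k+(a-i_1)^k\bigr)\,x^2=2i_1/a+2k\,i_1^k/\bigl(i_1^k+(a-i_1)^k\bigr)$ intersected with $P$, a segment and hence connected. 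If instead $a+k(b-i_2)=0$, then since $a,b\ge0$ and $k\ge1$ this forces $a=0$, $i_2=b$, and $i_1=0$; the second equation forces $t=\infty$ while the first is vacuous, so $s\in[0,\infty]$ is free and the image is $\{(x^1,2):0\le x^1\le2\}$, again connected.

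I expect the only genuine difficulty to be bookkeeping rather than anything conceptual: one must keep careful track of the boundary cases $s\in\{0,\infty\}$ and $t\in\{0,\infty\}$ (which is exactly when the intersection point lies on an edge of $P$), make sure the strict monotonicity of $F$ persists when one coefficient degenerates to zero, and verify that the three closed forms are precisely what the dual-coordinate map produces. Once the solution locus has been identified as a single point or a line segment in each case, connectedness is immediate, and Conditions (M1)--(M2) can then be checked directly from the explicit description of $f_{L(0,0)}-f_{L(a,b)}$.
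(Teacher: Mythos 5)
Your proposal is correct and follows essentially the same route as the paper: the paper's argument is exactly the elimination of $t$ via the second equation of (\ref{eq}), reduction to $i_1=a\frac{s}{1+s}+k(b-i_2)\frac{s^k}{1+s^k}$, and the monotonicity of this function to get a unique solution, with the degenerate strata ($b=0$ and $a=0$, $i_2=b$) read off from the coordinate map. You merely spell out the boundary cases $s,t\in\{0,\infty\}$ and the derivation of the line in the $b=0$ case more explicitly than the paper does, which is fine.
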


\begin{lem}
We assume $a,b\geq0$ and $(a,b)\neq(0,0)$. For any $(i_1,i_2)$ satisfying
\begin{equation*}
0\leq i_2\leq b,\ \ \ 0\leq i_1\leq a+k(b-i_2),
\end{equation*}
the intersection $V_{(a,b);I}$ forms a generator of $Mo(P)(L(0,0),L(a,b))$ of degree zero. Also, the intersection $V_{(a,b);I}$ does not form a generator of $Mo(P)(L(a,b),L(0,0))$.
\end{lem}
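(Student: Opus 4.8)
The plan is to attach to each component $V=V_{(a,b);I}$ its two associated gradient vector fields — namely (\ref{grad2}), which governs the direction $L(0,0)\to L(a,b)$, and its negative, which governs $L(a,b)\to L(0,0)$ — and to test conditions (M1) and (M2) of subsection \ref{Mo(P)} in each case. The organizing remark is that, in the sense of (\ref{gra}), (\ref{grad2}) is the gradient of $g_I:=\pi a\log(1+s)+\pi b\log(1+s^k+t)-2\pi(i_1x_1+i_2x_2)$, i.e.\ the potential (\ref{fabc}) of $L(a,b)$ corrected by the linear term produced by lifting $L(0,0)$ to height $2\pi I$; thus $V$ is exactly the zero set of $\grad g_I$, and switching the morphism direction reverses the $g_I$-gradient flow.

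For the first assertion I would start by computing the Hessian of $g_I$ in the coordinates $(x_1,x_2)$. The linear term disappears, so $\mathrm{Hess}\,g_I=\pi a\,\mathrm{Hess}(\log(1+s))+\pi b\,\mathrm{Hess}(\log(1+s^k+t))$, and a short computation shows that $\mathrm{Hess}(\log(1+s))$ is positive semidefinite of rank one while $\mathrm{Hess}(\log(1+s^k+t))$ is positive definite on $B$. Hence for $a,b\geq0$, $(a,b)\neq(0,0)$, the form $\mathrm{Hess}\,g_I$ is positive semidefinite with kernel equal, at each point of $V$, to the tangent space of the component $V_{(a,b);I}$ described in the preceding lemma; that is, $g_I$ realizes a Morse--Bott minimum along $V$. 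Because several components $V$ lie on $\partial P$, where the fibre coordinates $(x_1,x_2)$ become singular, I would reprove this along $V$ in charts that stay regular up to the boundary (for instance $t$ near the edge $\{t=0\}$, $s$ near $\{s=0\}$), confirming that the smooth extension of (\ref{grad2}) still vanishes exactly along $V$ and still makes $V$ a normally repelling invariant set. It then follows that the stable manifold of (\ref{grad2}) at a generic point $v\in V$ is $\{v\}$, so (M1) holds with $|V|=0$; and since $V\subseteq P$, one has $S_v\cap P=\{v\}$ with $v$ trivially an interior point, so (M2) holds. Thus $V_{(a,b);I}$ is a degree-zero generator of $Mo(P)(L(0,0),L(a,b))$.

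For the second assertion the field is $-\grad g_I$, so the sign change turns $V$ into a normally \emph{attracting} invariant set of the flow. The decisive input is that $V_{(a,b);I}\subseteq\partial P$ in the cases at hand: from the explicit description in the preceding lemma, when $b=0$ (so $a=1$ in the collection $\cE_c$) the component is the edge $E_1$ or $E_3$, and when $b\geq1$ one either has $x^2=2i_2/b\in\{0,2\}$, putting $V$ on $E_2$ or $E_4$, or $i_1\in\{0,\,a+k(b-i_2)\}$, putting $V$ on $E_1$ or $E_3$. Working again in a boundary-adapted chart near that edge, one checks that the reversed flow attracts toward $V$ from \emph{both} sides of the edge — the component of the vector field transverse to the edge changes sign across it in the attracting sense — so the stable manifold $S_v$ of a generic $v\in V$ is an open transverse slice through $v$ meeting $\partial P$ precisely at $v$. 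Hence $v$ lies on the boundary of $S_v\cap P$ inside $S_v$ for every $v\in V$, condition (M2) fails, and $V_{(a,b);I}$ is not a generator of $Mo(P)(L(a,b),L(0,0))$. (Even without the restriction $V\subseteq\partial P$, the reversed flow can never produce a degree-zero generator, since $\mathrm{Hess}\,g_I\geq0$ forces $g_I$ to have no local maximum, so $v$ is never a source of $-\grad g_I$.)

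The step I expect to be the main obstacle is precisely this boundary analysis. The torus-fibration coordinates $(x_1,x_2)$ are singular exactly along $\partial P$, which is where most of the components $V$ sit, so the three dynamical facts used above — that the extended vector field vanishes on $V$ and nowhere nearby off $V$, that its normal linearization along $V$ is hyperbolic with the stated sign, and that for the reversed flow the stable manifold actually protrudes from $P$ across the edge carrying $V$ — all have to be checked in charts regular up to and across each edge, with additional care at the four corners of the trapezoid, where two edges meet and two such charts degenerate at once. A secondary subtlety is the case $b=0$: there the positive-definite summand of $\mathrm{Hess}\,g_I$ is absent, and the transverse non-degeneracy of the Morse--Bott minimum rests on the rank-one term $\pi a\,\mathrm{Hess}(\log(1+s))$, which is non-degenerate transversally to $V$ exactly because $a>0$.
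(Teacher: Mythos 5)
Your proposal is correct and takes essentially the same route as the paper: the paper likewise treats the associated field (\ref{grad2}) as the gradient of the convex potential $\pi a\log(1+s)+\pi b\log(1+s^k+t)-2\pi(i_1x_1+i_2x_2)$, checks case by case (via the same sign considerations you package into the Hessian/Morse--Bott statement) that the forward flow repels from $V_{(a,b);I}$ so that $S_v=\{v\}$ and (M1), (M2) give a degree-zero generator, and then argues that the reversed flow attracts into $V_{(-a,-b);-I}\subseteq\partial P$ with positive-dimensional stable manifold, so (M2) fails. Your key input $V_{(a,b);I}\subseteq\partial P$ for the reversed direction (which you state only for the $\cE_c$-relevant cases $b\leq1$, $a=1$ when $b=0$) is exactly the assertion made in the paper's own proof, so your argument is neither more nor less general than the paper's on this point; your boundary-chart and transversality caveats are, if anything, a more careful account of the step the paper leaves implicit.
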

\begin{proof}
If $a+k(b-i_2)=0$ then the $a=0$, $i_2=b$, and the gradient vector field associated to $V_{(0,b);I}$ is of the form
\begin{equation*}
2\pi\left(bk\frac{s^k}{1+s^k+t}\right)\frac{\del}{\del x^1} + 2\pi\left(b\frac{t}{1+s^k+t} - b \right)\frac{\del}{\del x^2}.
\end{equation*}
For a point $v\in V_{(0,b);(0,b)}$, the stable manifold $S_v$ of the gradient vector field is $\{v\}$ itself since $b\frac{t}{1+s^k+t} - b\leq0$. So the intersection $V_{(0,b);(0,b)}$ is a generator of degree zero. If $b=0$ then the gradient vector field associated to $V_{(a,0);I}$ is of the form 
\begin{equation*}
2\pi\left(a\frac{s}{1+s} - i_1\right)\frac{\del}{\del x^1}.
\end{equation*}
For a point $v\in V_{(a,0);(i_1,0)}$, the stable manifold $S_v$ of the gradient vector field is $\{v\}$ itself, so the intersection $V_{(a,0);(i_1,0)}$ is a generator of degree zero. If $b\neq0$ and $a+k(b-i_2)\neq0$, then $V_{(a,b);(i_1,i_2)}$ consists of the point $v$ and the stable manifold $S_v$ of the gradient vector field is $\{v\}$ itself, so the intersection $V_{(a,b);(i_1,i_2)}$ is a generator of degree zero. \par
For $Mo(P)(L(a,b),L(0,0))\cong Mo(P)(L(0,0),L(-a,-b))$, the gradient vector field associated with $V_{(-a,-b);I}$ is the opposite direction to that of $V_{(a,b);I}$. For any $v\in V_{(-a,-b);I}$, the stable manifold $S_v$ of the gradient vector field is positive dimension and $V_{(-a,-b);I}$ is included $\del P$. Therefore $V_{(-a,-b);I}$ does not satisfy Condition (M2) and form a generator.
\end{proof}

\begin{lem}
For any $I\in\Z^2$ the intersection $V_{(-1,0);I}$ does not form a generator of $Mo(P)(L(0,0),L(-1,0))$. Also, the intersection $V_{(-d,-1);I}$ does not form a generator of $Mo(P)(L(0,0),L(-d,-1))$ if $d\geq0$.
\end{lem}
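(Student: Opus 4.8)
The plan is to show that for these "negative" line bundles the intersection loci $V_{(-1,0);I}$ and $V_{(-d,-1);I}$ always fail Condition (M2), hence never span a morphism. The underlying principle is exactly the one used at the end of the previous lemma: since $Mo(P)(L(0,0),L(-a,-b))\cong Mo(P)(L(a,b),L(0,0))$, the gradient vector field attached to $V_{(-a,-b);I}$ is the negative of the one attached to $V_{(a,b);I}$ in \eqref{grad2}, and we must check that its stable manifold $S_v$, for every $v$ in the component, is forced into the boundary $\del P$ in such a way that no generic interior point of $S_v\cap P$ exists.

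First I would record, as in \eqref{eq}–\eqref{grad2}, exactly which $I=(i_1,i_2)\in\Z^2$ give a nonempty $V_{(-a,-b);I}$: substituting $(a,b)=(-1,0)$ forces $-1\le i_1\le 0$ along the one-parameter family of solutions, and substituting $(a,b)=(-d,-1)$ with $d\ge 0$ forces $-1\le i_2\le 0$ together with a bounded range for $i_1$ obtained by the same monotonicity argument ($f(x)=a\frac{x}{1+x}+k(b-i_2)\frac{x^k}{1+x^k}$ monotone) used just above. In each case the component $V_{(-a,-b);I}$ is nonempty only for a handful of $I$, and one checks directly (by the explicit descriptions in the first Lemma, now with signs reversed) that the component is either a single boundary point, or a line segment sitting inside $\del P$, or a curve meeting $P$ only along its boundary edges $E_1,\dots,E_4$ — using the edge identifications $s=0$ on $E_1$, $t=0$ on $E_2$, etc., recorded in Figure \ref{poly2}.

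Next, for a generic $v\in V_{(-a,-b);I}$ I would compute the flow of the gradient field, which is minus \eqref{grad2}, near $v$: because the $dy$-coefficients of $L(-a,-b)$ are the negatives of the (nonnegative, for $a,b\ge 0$) coefficients of $L(a,b)$, the quantities $a\frac{s}{1+s}+bk\frac{s^k}{1+s^k+t}-i_1$ and $b\frac{t}{1+s^k+t}-i_2$ have the sign opposite to what made $S_v=\{v\}$ in the previous lemma; hence $S_v$ is now \emph{positive-dimensional}, and following the unstable/stable directions one sees the stable manifold is pushed toward the edges $s=0$, $t=0$, or $t/s^k=0,\infty$. Concretely $S_v\cap P$ either is contained in $\del P$ or has $v$ on its boundary within $S_v$, so Condition (M2) — the existence of a generic $v$ that is an \emph{interior} point of $S_v\cap P$ — fails, and $V_{(-1,0);I}$ (resp. $V_{(-d,-1);I}$) does not form a generator.

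I expect the main obstacle to be the bookkeeping for $V_{(-d,-1);I}$: unlike the $b=0$ case, the $b=-1$ family involves the coupled variables $s,t$ through $t=\frac{(1+s^k)(-i_2)}{-1-i_2}$ (valid for $i_2=0$ or $i_2=-1$), so one must carefully track, for each admissible $i_1$, whether the solution curve actually enters $\Int P$ at all and, if it does, verify that the reversed gradient flow still drives the stable manifold to $\del P$ rather than giving a genuine interior critical set. Handling the degenerate endpoints where $s\to 0$ or $s\to\infty$ (so that some edge of $P$ is hit) requires the same limiting analysis of \eqref{grad2} as in \cite{fut-kaj2}, and this is where a careful case split on $k$ versus the exponents appearing in $s^k$ is needed; everything else is a sign-flip of the argument already carried out in the preceding lemma.
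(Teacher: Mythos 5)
Your proposal follows essentially the same route as the paper's proof: determine the finitely many nonempty components $V_{(-1,0);I}$ and $V_{(-d,-1);I}$ (they are the edges $E_1$, $E_3$, $E_4$ or single points on $E_2$, $E_4$, hence all contained in $\del P$), observe that reversing the gradient field of the previous lemma makes the stable manifolds positive-dimensional (dimension one or two), and conclude that Condition (M2) fails since such a boundary point cannot be an interior point of $S_v\cap P$ inside $S_v$. Apart from a harmless sign slip in your formula for $t$ in the $b=-1$ case (the paper's relation gives $t=\frac{(1+s^k)i_2}{b-i_2}$), this is the argument given in the paper, which just spells out the case split $d>0$, $d=0$ with $i_2\neq b$, and $d=0$ with $i_2=b$ more explicitly.
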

\begin{proof}
The intersection $V_{(-1,0);(0,0)}$ is $E_1$, and the intersection $V_{(-1,0);(-1,0)}$ is $E_3$.  The gradient vector field associated to $V_{(-1,0);I}$ is of the form
\begin{equation*}
2\pi\left(-\frac{s}{1+s} - i_1\right)\frac{\del}{\del x^1}.
\end{equation*}
For any $v\in V_{(-1,0);I}$, the stable manifold $S_v$ of the gradient vector field is dimension one. However, the intersections $V_{(-1,0);I}$ do not satisfy Condition (M2) in subsection \ref{Mo(P)} and can not be the generators. 
If $d>0$, then $V_{(-d,-1);I}$ consists of the point $v$ and the stable manifold $S_v$ of the gradient vector field is dimension two. Since the intersections $V_{(-d,-1);I}$ is included in $\del P$, the intersections $V_{(-d,-1);I}$ do not satisfy Condition (M2) and can not be the generators. 
If $d=0$ and $i_2\neq b$, then the intersection $V_{(0,b);I}$ is
\begin{equation*}
V_{(0,b);(i_1,i_2)} =\left\{ \left( \frac{2i_1^{\frac{1}{k}}}{(i_1)^{\frac{1}{k}}+(k(b-i_2)-i_1)^{\frac{1}{k}}} + \frac{2i_2}{b} , \frac{2i_2}{b} \right)\right\}.
\end{equation*} 
If $b=\pm1$, then $V_{(0,b);I}$ is included in $\del P$ and the stable manifold of the gradient vector field is dimension two. Therefore, the intersections $V_{(0,-1);I}$ do not satisfy Condition (M2) and can not be the generators. 
If $d=0$ and $i_2= b$, then the intersection $V_{(0,b);(0,b)}$ is $E_4$. For any $v\in V_{(0,-1);(0,-1)}$, the stable manifold of the gradient vector field is dimension one. However, the intersection $V_{(0,-1);(0,-1)}$ do not satisfy Condition (M2) in subsection \ref{Mo(P)} and can not be the generator. 
\end{proof}

If $c=0$, we need the following two lemmas.
\begin{lem}
For $i_2=0$ and $0\leq i_1\leq k-1$, the intersection $V_{(-1,1);I}$ forms a generator of $Mo(P)(L(0,0),L(-1,1))$ of degree zero. However, for $i_2=1$, the intersection $V_{(-1,1);I}$ does not form a generator.
\end{lem}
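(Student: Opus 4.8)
The plan is to substitute $(a,b)=(-1,1)$ into the intersection equation (\ref{eq}) and the gradient vector field (\ref{grad2}), and to treat the two cases $i_2=0$ and $i_2=1$ separately. The key preliminary remark is that, by the formula for the dual coordinate in subsection \ref{Fk} (recall $C_1=C_2=1$), one has $\frac{t}{1+s^k+t}=\frac{x^2}{2}$, so that the $\frac{\del}{\del x^2}$-component of (\ref{grad2}) with $b=1$ equals $2\pi\left(\frac{t}{1+s^k+t}-i_2\right)=\pi(x^2-2i_2)$. Hence the affine line $\{x^2=2i_2\}$ is invariant under the flow of (\ref{grad2}) and is repelling in the transverse direction, so the stable manifold $S_v$ of any intersection point $v$ with second index $i_2$ lies in $\{x^2=2i_2\}$ and the degree computation reduces to a one-dimensional problem along an edge of $P$.

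For $i_2=0$ the intersection must lie on $E_2=\{x^2=0\}$, i.e.\ on $t=0$, and the first equation of (\ref{eq}) becomes $i_1=h(s):=-\frac{s}{1+s}+k\frac{s^k}{1+s^k}$. First I would record the shape of $h$: it is continuous on $[0,\infty]$ with $h(0)=0$, $h(\infty)=k-1$; for $k=1$ it vanishes identically; and for $k\geq2$ one has $h'(0^+)<0$, $h'(1)=\tfrac{k^2-1}{4}>0$, $h'(\infty)\to0^-$, and a short computation (clearing denominators in $h'(s)=0$ and applying Descartes' rule of signs) shows $h'$ has exactly two positive zeros $\sigma_1<\sigma_2$; thus $h$ is decreasing on $(0,\sigma_1)$, increasing on $(\sigma_1,\sigma_2)$, and decreasing on $(\sigma_2,\infty)$, with $h(\sigma_1)<0<k-1<h(\sigma_2)$ and $h\to k-1$ at $\infty$. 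In particular, for every $i_1$ with $0\leq i_1\leq k-1$ there is a solution $s_{(i_1,0)}\in(\sigma_1,\sigma_2)$ of $h(s)=i_1$, where $h'>0$; this gives an intersection point $v$ in the relative interior of $E_2$ at which the $\frac{\del}{\del x^1}$-component $2\pi(h(s)-i_1)$ of (\ref{grad2}) has derivative along $E_2$ of the sign of $h'(s_{(i_1,0)})>0$ (because $x^1$ is increasing in $s$ on $E_2$). Together with the transverse repulsion, $v$ is a source of (\ref{grad2}), so $S_v=\{v\}\subset P$ and $V_{(-1,1);(i_1,0)}$ satisfies (M1) and (M2) with degree $0$. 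For $k=1$ the unique component is all of $E_2$ (as $h\equiv0$) and the same transverse-repulsion argument gives $S_v=\{v\}$, exactly as for the component $V_{(0,b);(0,b)}=E_4$ in the lemma above, so it is again a degree-zero generator. (When $i_1=0$ or $i_1=k-1$ there is, additionally, the corner solution $s=0$ resp.\ $s=\infty$; this extra component is not a generator, since its stable manifold is a one-dimensional segment of $E_2$ with the corner as an endpoint, so (M2) fails.)

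For $i_2=1$ the equation $\frac{t}{1+s^k+t}=1$ has no finite solution, so an intersection can occur only on $E_4=\{x^2=2\}$, i.e.\ on $t=\infty$; there the first equation of (\ref{eq}) reduces to $i_1=-\frac{s}{1+s}\in[-1,0]$, whose only integral values are $i_1=0$ at the corner $E_1\cap E_4$ ($s=0$) and $i_1=-1$ at the corner $E_3\cap E_4$ ($s=\infty$), and for every other $i_1$ the intersection is empty. At each of these two corners $v$, transverse repulsion gives $S_v\subset E_4$, and the $\frac{\del}{\del x^1}$-component of (\ref{grad2}) restricted to $E_4$ equals $-2\pi\frac{s}{1+s}$ when $i_1=0$, resp.\ $\frac{2\pi}{1+s}$ when $i_1=-1$, and in both cases points toward $v$ along $E_4$. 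Hence $S_v$ is a one-dimensional segment of $E_4$ having the corner $v$ as an endpoint, so $v$ is not an interior point of $S_v\cap P\subset S_v$; condition (M2) fails and $V_{(-1,1);I}$ is not a generator.

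The only genuinely non-routine point is the case $i_2=0$: unlike the earlier lemmas, where $a\geq0$ makes the relevant function monotone, here $h$ is non-monotone, so one must pin down that the intersection point lies in the increasing range $(\sigma_1,\sigma_2)$ of $h$ — this is what yields the source behaviour, hence degree $0$ — and separately dispose of the two spurious corner components. Everything else is a direct substitution into (\ref{eq}) and (\ref{grad2}) together with the transverse-repulsion observation.
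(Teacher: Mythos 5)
Your proposal is correct and follows essentially the same route as the paper's proof: substitute $(a,b)=(-1,1)$ into (\ref{eq}) and (\ref{grad2}), note that the stable manifolds are confined to the lines $x^2=2i_2$, identify the solutions of $i_1=-\frac{s}{1+s}+k\frac{s^k}{1+s^k}$ in the interior of $E_2$ as degree-zero generators, and rule out the corner components (including both $i_2=1$ intersections, which are vertices of $P$) through Condition (M2). Your extra detail on the non-monotone function $h$ (exactly two critical points via Descartes' rule, hence a hyperbolic source at each interior solution) and the separate treatment of $k=1$ simply make precise what the paper asserts by inspection.
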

\begin{proof}
The gradient vector field associated to $V_{(0,b);I}$ is of the form
\begin{equation*}
2\pi\left(-\frac{s}{1+s}+k\frac{s^k}{1+s^k+t}-i_1\right)\frac{\del}{\del x^1} + 2\pi\left(b\frac{t}{1+s^k+t} - i_2 \right)\frac{\del}{\del x^2}.
\end{equation*}
If $0<i_1<k-1$ then the intersection $V_{(-1,1);(i_1,0)}$ is a point
\begin{equation*}
V_{(-1,1);(i_1,0)} = \left( \frac{2s_{i_1}}{1+s_{i_1}} + k\frac{2s_{i_1}^k}{1+s_{i_1}^k} , 0 \right),
\end{equation*} 
where $s_{i_1}$ satisfies $i_1 = -\frac{s_{i_1}}{1+s_{i_1}} + k\frac{s_{i_1}^k}{1+s_{i_1}^k}$.
If $i_1=0$ and $i_1=k-1$, then the intersection $V_{(-1,1);(i_1,0)}$ is not connected in both cases since the function $f(x)=-\frac{x}{1+x}+k\frac{x^k}{1+x^k}$ is not monotonically increasing. In fact, we have
\begin{equation*}
\begin{split}
V_{(-1,1);(0,0)} &= \{(0,0)\}\cup\left\{\left(-\frac{2s_{0}}{1+s_{0}} + k\frac{s_{0}^k}{1+s_{0}^k},0\right)\right\},\\
V_{(-1,1);(k-1,0)} &= \{(2(k+1),0)\}\cup\left\{\left(-\frac{2s_{k-1}}{1+s_{k-1}} + k\frac{s_{k-1}^k}{1+s_{k-1}^k},0\right)\right\},
\end{split}
\end{equation*}
where $s_{i_1}$ satisfies $i_1 = -\frac{s_{i_1}}{1+s_{i_1}} + k\frac{s_{i_1}^k}{1+s_{i_1}^k}$. For $\{(0,0)\}$ and $\{(2(k+1),0)\}$, the stable manifold $S$ of the gradient vector field is $S=\{(x^1,2)\}$ in both cases. These points are vertices of moment polytope, so they do not satisfy Condition (M2) and can not be the generators. For the others, the stable manifold is itself, so they form the generators of degree zero. We denote these generators by the same symbol $V_{(-1,1);I}$.
For $i_2=1$, the intersections $V_{(-1,1);I}$ are
\begin{equation*}
V_{(-1,1);(0,1)} = \left( 0 , 2 \right),\ \ \ V_{(-1,1);(-1,1)} = \left( 2 , 2 \right).
\end{equation*}
We see that the stable manifold of the gradient vector field is
\begin{equation*}
S_{V_{(-1,1);(i_1,1)}}=\{(x^1,2)\},
\end{equation*}
in both case $i_1=0$ and $i_1=-1$. However, since $V_{(-1,1);(0,1)}$ and $V_{(-1,1);(-1,1)}$ are vertices of the moment polytope $P$, so they can not be the generators.
\end{proof}

\begin{lem}
For any $I\in\Z^2$ the intersection $V_{(1,-1);I}$ does not form a generator of $Mo(P)(L(0,0),L(1,-1))$.
\end{lem}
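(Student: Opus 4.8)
The plan is to show that every nonempty component $V_{(1,-1);I}$ is contained in $\del P$ and violates Condition (M2), so that it contributes no generator of $Mo(P)(L(0,0),L(1,-1))$.

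First I would determine when $V_{(1,-1);I}\neq\emptyset$ and where it lies. Setting $(a,b)=(1,-1)$ in (\ref{eq}) gives $i_2=-\frac{t}{1+s^k+t}$ and $i_1=\frac{s}{1+s}-k\frac{s^k}{1+s^k+t}$. On $B=\Int P$ one has $0<s,t<\infty$, hence $i_2\in(-1,0)$, which is never an integer; so $L(0,0)\cap L(1,-1)$ meets no point lying over $B$, and every $V_{(1,-1);I}$ is contained in $\del P$. Examining the edges (Figure \ref{poly2}): $i_2=0$ forces $t=0$, so $V_{(1,-1);(i_1,0)}\subseteq E_2$; the value $i_2=-1$ occurs only on $E_4$; and $i_2\in[-1,0]$ throughout, so no other value appears. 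Along $E_4$ one has $i_1=\frac{s}{1+s}\in[0,1)$ for finite $s$, so the only integral values $i_1=0,1$ are attained at the vertices $(0,2)$ and $(2,2)$. Along $E_2$ the system reduces to $i_1=g(s):=\frac{s}{1+s}-k\frac{s^k}{1+s^k}$, with $g(0)=0$, $g(\infty)=1-k$ and $g<1$; its endpoints $s=0,\infty$ are the vertices $(0,0)$ and $(2+2k,0)$. (Since $V_{(1,-1);I}=V_{(-1,1);-I}$, all components in play have already been listed in the preceding lemma for $c=0$.)

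Next I would check (M2) case by case. A component that is a single vertex of the moment polytope cannot satisfy (M2): as in the earlier lemmas, its stable manifold meets $P$ in a set having that vertex at a corner of its boundary. For a component $\{v\}$ lying in the relative interior of $E_2$, the crucial point is the gradient field (\ref{grad2}) with $(a,b)=(1,-1)$: its $\del/\del x^2$-component equals $-2\pi\frac{t}{1+s^k+t}$, which is strictly negative when $x^2>0$ and strictly positive when $x^2<0$ (that is, for $t$ slightly negative in the smooth extension of the functions to $\tilde{B}$); in either region the flow is transversally attracted to $E_2$. Consequently $S_v$ is not contained in $E_2$, it contains points with $x^2>0$ arbitrarily close to $v$, and it extends to $x^2<0$; since $S_v\cap P\subseteq\{x^2\ge 0\}$, the point $v$ lies on the boundary of $S_v\cap P$ inside $S_v$, regardless of whether $\dim S_v=1$ or $2$, so (M2) fails. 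The same transversal-attraction argument disposes of the exceptional case $k=1$: there $g\equiv 0$, so $V_{(1,-1);(0,0)}=E_2$ is a single one-dimensional component, but for any $v\in\Int E_2$ the field vanishes along $E_2$ while remaining transversally attracting, so $S_v$ is the transverse arc ending at $v$ and $v\notin\Int(S_v\cap P)$.

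Combining these, no connected component of $\pi(L(0,0)\cap L(1,-1))$ satisfies (M2), which is exactly the assertion. The step I expect to be the main obstacle is the third one: one must argue, uniformly in $k$ and in $i_1$ — including at the possibly degenerate points where $g'(s)=0$, and in the special case $k=1$ where all of $E_2$ is the intersection — that an interior point of $E_2$ always lies on the boundary of its stable manifold intersected with $P$. The sign analysis of the $\del/\del x^2$-component of (\ref{grad2}) is what makes this go through without a Hessian computation; equivalently, using $f_{L(0,0)}-f_{L(1,-1)}=-(f_{L(0,0)}-f_{L(-1,1)})$ (see (\ref{fabc})), one may note that the degree-zero generators found in the preceding lemma for $Mo(P)(L(0,0),L(-1,1))$ become sinks of the present gradient field and hence acquire two-dimensional stable manifolds meeting $\del P$.
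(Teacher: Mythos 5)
Your proposal is correct and follows essentially the same route as the paper's (two-sentence) proof: every component $V_{(1,-1);I}$ lies in $\del P$ and its positive-dimensional stable manifold sticks out of $P$ transversally at the component, so Condition (M2) fails; you simply make explicit the enumeration of components and the sign analysis of the transverse part of the gradient field that the paper leaves implicit.
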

\begin{proof}
The intersection $V_{(1,-1);I}$ is included in $\del P$ and the stable manifold $S_{V_{(1,-1);I}}$ is dimension one or two. Therefore, the intersections $V_{(1,-1);I}$ do not satisfy Condition (M2) and can not be the generators.
\end{proof}

By the above lemmas, we obtain the following.

\begin{lem}
The basis $\ee_{(a,b);I}$ of $H^0(\cV'_{\cE_c}(\widetilde{\cO}(a_1,b_1),\widetilde{\cO}(a_1+a,b_1+b)))$ are expressed as the form
\begin{equation*}
\ee_{(a,b);I}(x)=e^{-f_I}e^{\ii Iy},
\end{equation*}
where $e^{-f_I}$ is continuous on $P$ and smooth on $B$. Furthermore, the function $f_I$ satisfies
\begin{equation}\label{dfI}
df_I = \sum_{j=1}^2\frac{\del f_I}{\del x_j}dx_j,\ \ \ \frac{\del f_I}{\del x_j} = \frac{y^j_{(a,b)} - 2\pi i_j}{2\pi}
\end{equation}
in $B$ and $\min_{x\in P}f_I=0$. In particular, we have 
\begin{equation}\label{VI2}
\{x\in P\ |\ f_I(x)=0\}=V_{(a,b);I}.
\end{equation}
Thus, the correspondence $\iota:V_{(a,b);I}\mapsto \ee_{(a,b);I}$ gives a quasi-isomorphism
\begin{equation*}
\iota : Mo_{\cE_c}(P)(L(a_1,b_1),L(a_1+a,b_1+b))\to \cV'_{\cE_c}(\widetilde{\cO}(a_1,b_1),\widetilde{\cO}(a_1+a,b_1+b))
\end{equation*}
of complexes.
\end{lem}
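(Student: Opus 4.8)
The plan is to make $f_I$ completely explicit, verify the two differential identities by a direct computation, and then read off the zero-set statement and the quasi-isomorphism from the lemmas already established. First I would substitute $s=e^{2x_1}$ and $t=e^{2x_2}$ into \eqref{pre-e}: up to the positive normalizing constant $c_{(a,b);I}$ introduced when $\ee_{(a,b);I}$ was defined, one has
\begin{equation*}
\ee_{(a,b);I}(x)=c_{(a,b);I}\,(1+s)^{-a/2}(1+s^k+t)^{-b/2}\,s^{i_1/2}\,t^{i_2/2}\,e^{\ii(i_1y_1+i_2y_2)},
\end{equation*}
which is of the form $e^{-f_I}e^{\ii Iy}$ with $I=(i_1,i_2)$ and
\begin{equation*}
f_I=-\log c_{(a,b);I}+\pi a\log(1+s)+\pi b\log(1+s^k+t)-i_1x_1-i_2x_2 .
\end{equation*}
Since $s$, $t$, $1+s$ and $1+s^k+t$ are smooth and positive on $B$, the function $f_I$ is smooth there; and as $|\ee_{(a,b);I}|=e^{-f_I}$ and $\ee_{(a,b);I}$ was defined precisely in the cases (determined by the preceding lemmas) in which it extends continuously to $P$, the function $e^{-f_I}$ is continuous on $P$.

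Next I would differentiate $f_I$ in $x_1$ and $x_2$, using $\del_{x_1}\log(1+s)=2s/(1+s)$, $\del_{x_1}\log(1+s^k+t)=2ks^k/(1+s^k+t)$ and $\del_{x_2}\log(1+s^k+t)=2t/(1+s^k+t)$. This gives
\begin{equation*}
\frac{\del f_I}{\del x_1}=a\frac{s}{1+s}+bk\frac{s^k}{1+s^k+t}-i_1,\qquad \frac{\del f_I}{\del x_2}=b\frac{t}{1+s^k+t}-i_2,
\end{equation*}
which, compared with the expression for the section $L(a,b)$ recalled above, equals $(y^j_{(a,b)}-2\pi i_j)/2\pi$; this is \eqref{dfI}. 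In particular $2\pi\grad(f_I)$ is the gradient vector field \eqref{grad2} attached to $V_{(a,b);I}$, so the critical locus $\{df_I=0\}$ in $P$ is $\{x\in P\mid y^j_{(a,b)}(x)=2\pi i_j,\ j=1,2\}=V_{(a,b);I}$. The normalization $\max_{x\in P}|\ee_{(a,b);I}|=1$ says exactly $\min_{x\in P}f_I=0$, so $\{x\in P\mid f_I(x)=0\}$ is the set of minimizers of $f_I$ on $P$, and it remains to identify this set with the critical locus $V_{(a,b);I}$.

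This identification is the heart of the lemma and the step I expect to be the main obstacle. For $a,b\ge 0$ the function $\pi a\log(1+e^{2x_1})+\pi b\log(1+e^{2kx_1}+e^{2x_2})$ is a non-negative combination of two log-sum-exp functions, hence convex in $(x_1,x_2)$; the affine term does not affect convexity, so $f_I$ is convex and its minimizing set coincides with its critical locus, which by the previous paragraph is $V_{(a,b);I}$ --- nonempty and contained in $P$ by the earlier lemmas --- and there $f_I$ takes the minimum value $0$. The only difference $(a,b)$ arising in $\cE_c$ that is not $\ge 0$ but still contributes a generator is $(a,b)=(-1,1)$, occurring for $\cE_0$; there $f_I$ need not be convex, and instead I would use the explicit description of $V_{(-1,1);I}$ found above, where the critical points of $f_I$ turn out to be the listed points together with two vertices of $P$ whose stable manifolds are computed to be one-dimensional --- i.e.\ those vertices are saddles of $f_I$, not minimizers --- so the minimizing set is again $V_{(-1,1);I}$. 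Excluding spurious minimizers on $\del P$ and handling this non-convex case are the delicate points; they are covered by the monotonicity and stable-manifold statements that the preceding lemmas were arranged to supply.

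It then remains to see that $\iota\colon V_{(a,b);I}\mapsto\ee_{(a,b);I}$ is a quasi-isomorphism of complexes. By the preceding lemmas, each space $Mo_{\cE_c}(P)(L(a_1,b_1),L(a_1+a,b_1+b))$ is either $0$ --- precisely when $\cO(a,b)$ has no global section, i.e.\ the two objects lie in the wrong order in $\cE_c$ --- or is spanned by the degree-zero generators $V_{(a,b);I}$; so this complex is concentrated in degree $0$ and $\fm_1$ vanishes on it. On the other side, $\cE_c$ being a strongly exceptional collection forces $\mathrm{Ext}^{>0}$ between any two of the line bundles to vanish, so the cohomology of $\cV'_{\cE_c}(\widetilde\cO(a_1,b_1),\widetilde\cO(a_1+a,b_1+b))$ is concentrated in degree $0$ and equals $\Gamma(\Fk,\cO(a,b))$ with basis $\{\ee_{(a,b);I}\}$, each $\ee_{(a,b);I}$ being the image of a holomorphic section and hence a $d_{ab}$-cocycle. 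By \eqref{VI2} the map $\iota$ is a bijection between these bases; it trivially intertwines the vanishing differential $\fm_1$ with $d_{ab}$, hence induces an isomorphism on cohomology in degree $0$ and, vacuously, in all other degrees. Therefore $\iota$ is a quasi-isomorphism of complexes.
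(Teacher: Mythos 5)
Your proof takes essentially the same route as the paper: read $f_I$ off from \eqref{pre-e}, verify \eqref{dfI} by direct differentiation, identify the set where $f_I$ attains its minimum $0$ with $V_{(a,b);I}$, and deduce the quasi-isomorphism from both complexes being concentrated in degree zero — the places where you add detail (convexity of $f_I$ for $a,b\ge 0$, the separate treatment of $(-1,1)$, and strong exceptionality killing higher cohomology on the $\cV'$ side) are exactly the points the paper handles by assertion and by citing minimality of $Mo_{\cE_c}(P)$. The one slip is your displayed formula for $f_I$: taking $-\log$ of the modulus in \eqref{pre-e} gives coefficients $\tfrac{a}{2}\log(1+s)+\tfrac{b}{2}\log(1+s^k+t)$, not $\pi a$ and $\pi b$ (those constants belong to the Lagrangian potential \eqref{fabc}, which is $2\pi$ times the logarithmic part of $f_I$), so your $f_I$ as written does not satisfy \eqref{dfI}; since the partial derivatives you then state are the correct ones for the $\tfrac{a}{2},\tfrac{b}{2}$ normalization, nothing downstream is affected.
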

\begin{proof}
By the expression (\ref{pre-e}), we see that the basis $\ee_{(a,b);I}$ of $H^0(\cV'_\cE(\widetilde{\cO}(a_1,b_1),\widetilde{\cO}(a_1+a,b_1+b)))$ are expressed as the form
\begin{equation*}
\ee_{(a,b);I} =c_{(a,b);(i_1,i_2)}(1+s)^\frac{-a}{2}(1 + s^k + t)^\frac{-b}{2}s^\frac{i_1}{2}t^\frac{i_2}{2}e^{\ii(i_1y_1 + i_2y_2)}
\end{equation*}
where $s = e^{2 x_1},\ t= e^{2 x_2}$ and $c_{(a,b);(i_1,i_2)}$ is a constant. Then, the functions $e^{-f_I}$ and $f_I$ are given by
\begin{equation}\label{fI2}
\begin{split}
e^{-f_I}& = c_{(a,b);I}(1+s)^{-\frac{a}{2}}(1 + s^k + t)^{-\frac{b}{2}}s^\frac{i_1}{2}t^\frac{i_2}{2},\\
f_I  &= \log\left((1+s)^\frac{a}{2}(1 + s^k + t)^\frac{b}{2}s^\frac{-i_1}{2}t^\frac{-i_2}{2}\right) + \mbox{const}  \\
&= \frac{a}{2}\log(1+e^{2 x_1}) + \frac{b}{2}\log(1+e^{2k x_1}+e^{2 x_2}) -  i_1 x_1 -  i_2 x_2 + \mbox{const}.
\end{split}
\end{equation}
By this expression, the function $f_I$ satisfies (\ref{dfI}).
If $c>0$, then we have $0 \leq i_1 \leq a+kb$ and $0 \leq i_2 \leq b$.
If $c=0$, then we have $i_2=0$ and $0 \leq i_1 \leq k-1$.
Therefore, $e^{-f_I}$ is continuous on $P$ and smooth on $B$. Furthermore, the defining equations of $V_{(a,b);I}$ coincide with the conditions which $x\in P$ is the minimum point of $f_I$. Since the DG structure of $Mo_{\cE_c}(P)$ is minimal, we obtain the last statement.
\end{proof}

For a full strongly exceptional collection $\cE_c$, we see that the space
\begin{equation*}
Mo(P)(L(a_1,b_1),L(a_1+a,b_1+b)) \simeq Mo(P)(L(0,0),L(a,b)),
\end{equation*}
satisfies $0\leq b\leq 1$. We call an element of $ Mo(P)(L(0,0),L(a,b))$ a morphism {\bf of type} $b$. Before we discuss the composition structure in $ Mo_{\cE_c}(P)$, we consider the gradient vector field associated to $V_{(a,b);I}\in Mo_{\cE_c}(P)(L(0,0),L(a,b))$ with $b=0$ and $b=1$.
If $b=0$, then the gradient vector field is of the form
\begin{equation*}
2\pi\left(a\frac{s}{1+s}-i_1\right)\frac{\del}{\del x^1}.
\end{equation*}
Thus, the gradient trajectories starting from $v=(v^1,v^2)\in V_{(a,b);I}$ are always in the line $x^2=v^2$.
If $b=1$, then the gradient vector field is of the form
\begin{equation*}
2\pi\left(a\frac{s}{1+s} + k\frac{s^k}{1+s^k+t} - i_1\right)\frac{\del}{\del x^1} + 2\pi\left(\frac{t}{1+s^k+t} - i_2 \right)\frac{\del}{\del x^2}.
\end{equation*}
In this case, we see that $V_{(a,1);(i_1,0)}$ belongs to $E_2\subset\del P$, and $V_{(a,1);(i_1,1)}$ belongs to $E_4\subset\del P$. This implies that the gradient trajectories starting from $v\in V_{(a,1);(i_1,0)}$ are always in the line $x^2 = 0$, and the gradient trajectories starting from $v \in V_{(a,1);(i_1,1)}$ are always in the line $x^2 = 2$.

\begin{thm}
The correspondence $\iota : Mo_{\cE_c}(P)\to \cV'_{\cE_c}$ is compatible with the composition. 
\end{thm}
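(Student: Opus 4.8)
The plan is to reduce the statement to a comparison of the $\fm_2$-structure constants on pairs of generators. First I would record that, by the lemmas of this subsection, every generator occurring in $Mo_{\cE_c}(P)$ has degree $0$, and the same holds for the morphism spaces of $\cV'_{\cE_c}$; hence $\fm_k$ for $k\ge 3$, having degree $2-k<0$, vanishes on both sides, so it suffices to check that $\iota$ intertwines $\fm_1$ (vacuous, as everything is closed) and $\fm_2$ — after which $\iota$, being also unital since it sends the identity $V_{(0,0);(0,0)}$ to the constant section $1$, is a DG-functor. Thus the task is: for every composable pair of degree-$0$ generators $V_{(a,b);I}$, $V_{(a',b');I'}$ (in $Mo(L_1,L_2)$ and $Mo(L_2,L_3)$), show $\iota\big(\fm_2(V_{(a,b);I},V_{(a',b');I'})\big)=\ee_{(a,b);I}\cdot\ee_{(a',b');I'}$ in $\cV'_{\cE_c}$.

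On the $\cV'_{\cE_c}$-side the product is the multiplication of the rescaled twisted monomials of (\ref{pre-e}): since $\psi_{(i_1,i_2)}\psi_{(i'_1,i'_2)}=\psi_{(i_1+i'_1,i_2+i'_2)}$, formula (\ref{fI2}) together with the exact identity $g_I+g_{I'}=g_{I+I'}$, where $g_I=\tfrac a2\log(1+s)+\tfrac b2\log(1+s^k+t)-i_1x_1-i_2x_2$, gives $\ee_{(a,b);I}\cdot\ee_{(a',b');I'}=\lambda_{I,I'}\,\ee_{(a+a',b+b');I+I'}$ with $\lambda_{I,I'}=e^{C_{I,I'}}>0$ and $C_{I,I'}=\min_P g_I+\min_P g_{I'}-\min_P g_{I+I'}\le 0$. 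One checks first that $\ee_{(a+a',b+b');I+I'}$ is again a basis element: the index ranges add, $0\le i_1+i'_1\le(a+a')+k\big((b+b')-(i_2+i'_2)\big)$ and $0\le i_2+i'_2\le b+b'$, which for each of the finitely many nontrivial compositions inside $\cE_c$ — where the two $b$-types are $0$ and $1$, so their sum is $1$ and never $2$ — stays inside the ranges described in the lemmas; the case $c=0$, in particular the compositions involving the space attached to $L(-1,1)$, I would verify directly.

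On the $Mo_{\cE_c}(P)$-side I would pin down $\fm_2(V_{(a,b);I},V_{(a',b');I'})=\sum_{V_{13}}\sum_{[\gamma]\in\mathcal{HGT}}e^{-A(\Gamma)}V_{13}$ using three facts. (i) Any trivalent gradient tree with input legs on $V_{(a,b);I}$ and $V_{(a',b');I'}$ must end on the component $V_{(a+a',b+b');I+I'}$ of $Mo(L_1,L_3)$: the potential attached to a degree-$0$ generator is of the form $-f_{L(\,\cdot\,,\,\cdot\,)}+2\pi(\text{lattice label})\cdot x$ with $f_{L(a,b)}$ linear in $(a,b)$; the bundle labels already add because $(a+a',b+b')=(a_3-a_1,b_3-b_1)$, and the balancing condition at the trivalent vertex — equating the gradient of the output potential there with the signed sum of the gradients of the two input potentials — then forces the output lattice label to be $I+I'$. (ii) Every such tree has the same symplectic area: writing $A(\Gamma)$ as a sum of the three edge contributions, computing each by Stokes' theorem for the strip between the two relevant Lagrangian sections (whose primitives are the potentials), the contributions at the interior vertex cancel by the additivity used in (i), leaving a fixed combination of potential values at the three external vertices; since a degree-$0$ generator is the minimum locus of the associated $f_I=g_I-\min_P g_I$, this combination equals $\min_P g_{I+I'}-\min_P g_I-\min_P g_{I'}=-C_{I,I'}$, and with the compatible normalizations of the K\"ahler form and the connection $D$ one gets $e^{-A(\Gamma)}=\lambda_{I,I'}$. (iii) $\mathcal{HGT}(V_{(a,b);I},V_{(a',b');I'};V_{(a+a',b+b');I+I'})$ consists of exactly one homotopy class. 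Granting (i)–(iii), $\fm_2(V_{(a,b);I},V_{(a',b');I'})=\lambda_{I,I'}\,V_{(a+a',b+b');I+I'}$, which under $\iota$ equals the product above, so $\iota$ is compatible with $\fm_2$, i.e. with the composition.

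The main obstacle is (iii) — the count being exactly $1$, in particular at least $1$, so that $\fm_2$ does not vanish where the mirror product does not. I would prove it by a finite case analysis over composable pairs of generators in $\cE_c$, using that every morphism has $b$-type $0$ or $1$, so each nontrivial composition pairs a type-$0$ with a type-$1$ generator. For a type-$0$ generator the associated gradient vector field is a multiple of $\del/\del x^1$, so its trajectories remain in a fixed slice $x^2=\mathrm{const}$ and the generator itself is a curve transverse to those slices; a type-$1$ generator is a single point lying on the edge $E_2$ (when $i_2=0$) or $E_4$ (when $i_2=1$) of the moment polytope, with its trajectories confined to that edge. Thus two of the three legs of any candidate tree are essentially confined to a boundary edge while the third runs in a horizontal slice, and in each case the explicit formulas for the generators and vector fields from the lemmas determine the trivalent vertex and the three legs uniquely up to homotopy, and at the same time exhibit one such tree. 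This case check, though routine, is the delicate part of the argument.
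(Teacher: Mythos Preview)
Your proposal is correct and follows essentially the same route as the paper: both reduce to the nontrivial compositions, observe that these always pair a type-$0$ generator (a curve with purely horizontal gradient flow) with a type-$1$ generator (a point on $E_2$ or $E_4$ with flow along that edge), deduce a unique gradient tree, and match its symplectic area with the structure constant coming from the product of the rescaled monomials $\ee_{(a,b);I}$. Your formulation via the three claims (i)--(iii) and the Stokes computation of $A(\Gamma)$ is a slightly more organized restatement of the paper's direct argument, which simply writes $A(\gamma)=f_{(\alpha,\beta);I}(z)+f_{(\alpha',\beta');J}(z)$ and notes that the output vertex $z$ is exactly the minimizer appearing on the $\cV'_{\cE_c}$ side; your $-C_{I,I'}=\min_P g_{I+I'}-\min_P g_I-\min_P g_{I'}$ is the same number.
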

\begin{proof}
The composition in $\cV_{\cE_c}'$ or $H^0(\cV_{\cE_c}')$ is simply a product between functions, i.e.,
\begin{equation}\label{prod-e-pre}
e^{-f_{(\alpha,\beta);I}}e^{\ii Iy} \otimes e^{-f_{(\alpha',\beta');J}}e^{\ii Jy} \longmapsto e^{-\left(f_{(\alpha,\beta);I}+f_{(\alpha',\beta');J}\right)}e^{\ii (I+J)y},
\end{equation}
where $\alpha:=a_2-a_1,\ \beta:=b_2-b_1,\ \alpha':=a_3-a_2,\ \beta':=b_3-b_2$.
Here, let $v$ be a point where the function $f_{(\alpha,\beta);I}+f_{(\alpha',\beta')}$ is minimum. Then, we can rewrite (\ref{prod-e-pre}) by \begin{equation}\label{prod-e}
\ee_{(\alpha,\beta);I} \otimes \ee_{(\alpha',\beta');J} \mapsto e^{-\left(f_{(\alpha,\beta);I}(v) + f_{(\alpha',\beta');J}(v)\right)}\ee_{(\alpha+\alpha',\beta+\beta');I+J}.
\end{equation}\par
On the other hand, we consider a composition $V_I\cdot W_J$ in $Mo_{\cE_c}(P)$ of two generators such that $V_I \neq P$ and $W_J\neq P$. We have such a composition only when
\begin{itemize}
\item $V_I$ is of type $b=0$ and $W_J$ is of type $b=1$ or
\item $V_I$ is of type $b=1$ and $W_J$ is of type $b=0$.
\end{itemize}
In both cases, the result $V_I\cdot W_J$ is generated by a generator $Z_{I+J}$ of type $b = 1$ with index $I + J$ since indices are preserved by the composition. This implies that $Z_{I+J}$ belongs to $E_2$ (resp. $E_4$) if the generator, $V_I$ or $W_J$, of type $b = 1$ belongs to $E_2$ (resp. $E_4$). The gradient trajectories starting from a point in $V_{I}$ of type $b=0$ run horizontally. Also, the gradient trajectories starting from a point in $V_{I}$ of type $b=1$ and ending at a point in $Z_K$ of type $b=1$ run horizontally. These imply that there exists the unique gradient tree $\gamma$ starting from $v\in V_I$ and $w\in W_J$ and ending at $z\in Z_{J+K}$. Since the Lagrangian $L(a,b)$ is locally the graph of $df_{(a,b);I}$ and $f_{(a,b);I}=0$ on $V_{(a,b);I}$, we have
\begin{equation}
\begin{split}\label{m-comp}
V_{(\alpha,\beta);I}\otimes V_{(\alpha',\beta');J} &\mapsto e^{-A(\gamma)}V_{(\alpha+\alpha',\beta+\beta');I+J},\\
A(\gamma) &:= f_{(\alpha,\beta);I}(z) + f_{(\alpha',\beta');J}(z).
\end{split}
\end{equation}
Since the point $z$ in (\ref{m-comp}) coincides with the point $v$ in (\ref{prod-e}), the correspondence $\iota:V_{(a,b);I}\mapsto \ee_{(a,b);I}$ is compatible with the composition of morphisms:
\begin{equation*}
\fm_2 \circ (\iota\otimes\iota)\left(V_{(\alpha,\beta);I}\otimes V_{(\alpha',\beta');J}\right) =  \iota \circ \fm_2 \left(V_{(\alpha,\beta);I}\otimes V_{(\alpha',\beta');J}\right).
\end{equation*}
\end{proof}

To summarize, the correspondence $\iota:Mo_{\cE_c}(P)\to\cV'_{\cE_c}$ is a quasi-isomorphism between DG-categories
\begin{equation*}
Mo_{\cE_c}(P)\simeq \cV'_{\cE_c}.
\end{equation*}
Also, the DG-category $DG_{\cE_c}(\Fk)$ is quasi-isomorphic to $\cV_\cE'$ by subsection \ref{DGFk}. Thus, we obtain the following.
\begin{cor}
We have a DG-equivalence
\begin{equation*}
Mo_{\cE_c}(P) \simeq DG_{\cE_c}(\Fk).
\end{equation*}
\end{cor}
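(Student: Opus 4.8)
The plan is to derive the corollary as a formal consequence of the two DG-quasi-isomorphisms assembled in this section, chaining them through the intermediate category $\cV'_{\cE_c}$; no new geometry is required.

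First I would recall the symplectic side. The lemma on the bases $\ee_{(a,b);I}$ exhibits $\iota:V_{(a,b);I}\mapsto\ee_{(a,b);I}$ as a bijection from the degree-zero generators of $Mo_{\cE_c}(P)(L(a_1,b_1),L(a_1+a,b_1+b))$ onto a basis of $H^0(\cV'_{\cE_c}(\widetilde{\cO}(a_1,b_1),\widetilde{\cO}(a_1+a,b_1+b)))$, and, since every surviving generator in the relevant range $0\le b\le 1$ lies in degree zero and these morphism complexes are minimal, $\iota$ is in fact an isomorphism on each hom-complex. The preceding theorem shows $\iota$ intertwines $\fm_2$ with the product on $\cV'_{\cE_c}$, the key points being that both operations preserve the index $I$ and that the value of $f_{(\alpha,\beta);I}+f_{(\alpha',\beta');J}$ at its minimum computes the structure constant on both sides. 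Since $\iota$ is also a bijection on objects, sending $(L(0,0),L(1,0),L(c,1),L(c+1,1))$ to $(\widetilde{\cO},\widetilde{\cO}(1,0),\widetilde{\cO}(c,1),\widetilde{\cO}(c+1,1))$, this makes $\iota:Mo_{\cE_c}(P)\to\cV'_{\cE_c}$ an object-bijective DG-quasi-isomorphism.

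Next I would recall the complex side: by the construction in subsection \ref{DGFk}, the faithful functor $\cI$ restricts to $DG_{\cE_c}(\Fk)\to\cV'_{\cE_c}$, it is surjective on objects by the very definition $\cV'_{\cE_c}=\cI(DG_{\cE_c}(\Fk))$, it sends the holomorphic generators $\psi_{(i_1,i_2)}$ of $H^0(DG_{\cE_c}(\Fk)(\cO(a_1,b_1),\cO(a_2,b_2)))$ bijectively to the rescaled generators $\Psi^{-1}_{(a_2-a_1,b_2-b_1)}\psi_{(i_1,i_2)}$ of $H^0(\cV'_{\cE_c}(\widetilde{\cO}(a_1,b_1),\widetilde{\cO}(a_2,b_2)))$, and it is compatible with composition because composition on both sides is just the product of sections; hence $\cI$ is a DG-quasi-isomorphism too. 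Then I would chain the two: $Mo_{\cE_c}(P)\xrightarrow{\iota}\cV'_{\cE_c}\xleftarrow{\cI}DG_{\cE_c}(\Fk)$ is a zig-zag of object-bijective DG-quasi-isomorphisms, so $Mo_{\cE_c}(P)$ and $DG_{\cE_c}(\Fk)$ are quasi-equivalent, which is the asserted DG-equivalence. I do not expect any real obstacle here — all the substantive work (identifying the generators and their degrees, discarding the boundary intersections violating (M2), and checking compatibility with $\fm_1$ and $\fm_2$) has already been carried out above; the only point to flag is that ``DG-equivalence'' should be read in the sense of quasi-equivalence, i.e.\ an isomorphism in the homotopy category of DG-categories, so that passing through $\cV'_{\cE_c}$ and formally inverting the object-bijective quasi-isomorphism $\cI$ is legitimate.
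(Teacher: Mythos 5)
Your proposal is correct and follows the paper's own route: the corollary is obtained exactly by chaining the quasi-isomorphism $\iota:Mo_{\cE_c}(P)\to\cV'_{\cE_c}$ established in the preceding lemma and theorem with the quasi-isomorphism $DG_{\cE_c}(\Fk)\simeq\cV'_{\cE_c}$ coming from the restriction functor $\cI$ of subsection \ref{DGFk}. Your extra remarks (object-bijectivity, minimality, reading the statement as quasi-equivalence via the zig-zag) only make explicit what the paper leaves implicit.
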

This completes the proof of the main theorem (Theorem \ref{main}). 

\begin{rem}
If $k=1$ then these results coincide with Futaki-Kajiura's results\cite{fut-kaj2}.
\end{rem}

\subsection{Minimality of $Mo(P)$}\label{minimality}
We see that the full subcategory $Mo_{\cE_c}(P)\subset Mo(P)$ is minimal in the previous subsection. In the case of $\F_1$, Futaki-Kajiura's result implies that $Mo(P)$ is minimal. Because the intersection $V_{(a,b);(i_1,i_2)}=\pi(L(0,0)\cap L(a,b))$ is connected for any $(a,b),\ (i_1,i_2)\in\Z^2$ and there is a no gradient trajectory starting at $V_{(a,b);(i_1,i_2)}$ and ending at itself. However, in general, $Mo(P)$ is not minimal.
Indeed, in the case of $\Fk\ (k\geq2)$, $Mo(P)$ is not minimal. Because there exists a non-connected $V_{(a,b);(i_1,i_2)}$ and a gradient trajectory starting at some connected component and ending at another connected component.
 In this subsection, we give an example of the space of morphisms, which has non-trivial differential.\par
For $k=2$ and $(a,b)=(-7,3)$, we consider the space $Mo(P)(L(0,0),L(-7,3))$. The intersection of $L(0,0)$ with $L(-7,3)$ is expressed as
\begin{equation*}
i_1 = -7\frac{s}{1+s} + 6\frac{s^2}{1+s^2+t},\ \ \ 
i_2 = 3\frac{t}{1+s^2+t}.
\end{equation*}
If $i_2=0$ and $s<\infty$ then $t=0$. If $i_1=0$ then $s=0$ or $3\pm\sqrt{2}$. Since $(x^1,x^2)=(\frac{2s}{1+s}+2\frac{2s^2}{1+s^2+t},\frac{2t}{1+s^2+t})$, we obtain the components
\begin{equation*}
V_{(-7,3);(0,0)}  = \left\{\left(0,0\right)\right\}\cup\left\{\left(\frac{100-10\sqrt{2}}{21},0\right)\right\}\cup\left\{\left(\frac{100+10\sqrt{2}}{21},0\right)\right\}.
\end{equation*}
It is not connected. We set $V_{(-7,3);(0,0)}^0:=\left\{\left(\frac{100-10\sqrt{2}}{21},0\right)\right\}$ and $V_{(-7,3);(0,0)}^1:=\left\{\left(\frac{100+10\sqrt{2}}{21},0\right)\right\}$. For $(i_1,i_2)=(0,0)$, the gradient vector field is given by
\begin{equation*}
2\pi\left(-7\frac{s}{1+s} + 6\frac{s^2}{1+s^2+t}\right)\frac{\del}{\del x^1} + 2\pi\left(3\frac{t}{1+s^2+t} \right)\frac{\del}{\del x^2}.
\end{equation*}

\begin{figure}[h]
\center
\begin{tikzpicture}
\draw[dashed](0,0)-- (12,0)--  (4,4)--(0,4)-- cycle;
\draw(0,0) -- (12,0);
\fill[black](0,0)circle(0.06) node[below ]{$(0,0)$};
\fill[black](8.2,0)circle(0.06) node[below ]{$V_{(-7,3);(0,0)}^0$};
\fill[black](10.8,0)circle(0.06) node[below ]{$V_{(-7,3);(0,0)}^1$};
\draw [arrows = {-Stealth[scale=1.5]}] (8.2,0) -- (9.5,0);
\draw [arrows = {-Stealth[scale=1.5]}] (8.2,0) -- (4,0);
\draw [arrows = {-Stealth[scale=1.5]}] (12,0) -- (11.5,0);
\end{tikzpicture}
\caption{The gradient vector field of $f_{(-7,3);(0,0)}$.}
\label{poly2}
\end{figure}

A point $(0,0)$ does not satisfy Condition (M2) in subsection \ref{Mo(P)} and can not be a generator. For a point $\left(\frac{100-10\sqrt{2}}{21},0\right)$, the stable manifold is $\left\{\left(\frac{100-10\sqrt{2}}{21},0\right)\right\}$, so $V_{(-7,3);(0,0)}^0$ turns out to be a generator of degree zero. For a point $\left(\frac{100+10\sqrt{2}}{21},0\right)$, the stable manifold $S$ turns out to be $\left\{\left(x^1,0\right)\ \middle|\ \frac{100-10\sqrt{2}}{21}<x^1<6\right\}$. The point $\left(\frac{100+10\sqrt{2}}{21},0\right)$ is an interior point of $S\cap P\subset S$, so $V_{(-7,3);(0,0)}^1$ turns out to be a generator of degree one. For these generators, there exists a gradient tree starting at $V_{(-7,3);(0,0)}^0$ and ending at $V_{(-7,3);(0,0)}^1$. Therefore, we have
\begin{equation*}
\begin{split}
\fm_1(V_{(-7,3);(0,0)}^0) &= e^{-A} V_{(-7,3);(0,0)}^1,\\
A &= 3\pi \log\left(\frac{12+6\sqrt{2}}{12-6\sqrt{2}}\right) - 7\pi \log\left(\frac{4+\sqrt{2}}{4-\sqrt{2}}\right),
\end{split}
\end{equation*}
where $A$ is symplectic area. Thus, the space $Mo(P)(L(0,0),L(-7,3))$ has non-trivial differential, which implies that $Mo(P)$ cannot be minimal.


\begin{thebibliography}{10}
\bibitem{A}
M. Abouzaid. 
\newblock {\em Morse homology, tropical geometry, and homological mirror symmetry for toric varieties.}
\newblock {Selecta Mathematica.} 15 (2009), 189-270.

\bibitem{AKO}
D. Auroux, L. Katzarkov and D. Orlov.
\newblock {\em Mirror symmetry for Del Pezzo surfaces: Vanishing cycles and coherent sheaves.}
\newblock Invent. math. 166, (2006), 537-582.

\bibitem{BK}
A.I. Bondal and M.M. Kapranov.
\newblock {\em Enhanced triangulated categories.}
\newblock{Math. USSR-Sb.}, 1991, 70:93-107.

\bibitem{Chan}
K. Chan. 
\newblock {\em Holomorphic line bundles on projective toric manifolds from Lagrangian sections of their mirrors by SYZ transformations.}
\newblock {International Mathematics Research Notices.} 2009.24 (2009), 4686-4708.

\bibitem{coxtor}
D. Cox, J. Little, and H. Schenck.
\newblock {\em Toric varieties}, volume 124 of {\em Graduate Studies in
  Mathematics}.
\newblock American Mathematical Society, Providence, RI, 2011.

\bibitem{EL}
A. Elagin and V. Lunts.
\newblock {\em On full exceptional collections of line bundles on del Pezzo surfaces.}
\newblock {Moscow Mathematical Journal}, 16:4 , 691-709, 2016

\bibitem{FO}
K.~Fukaya and Y.-G.~Oh.
\newblock {\em Zero-loop open strings in the cotangent bundle and morse homotopy.}
\newblock {Asian J. Math.}, 1:96--180, 1997.

\bibitem{fulton93toric}
W.~Fulton.
\newblock {\em Introduction to toric varieties}.
\newblock Number 131. Princeton University Press, 1993.

\bibitem{fut-kaj1}
M.~Futaki and H.~Kajiura,
\newblock {\em Homological mirror symmetry of $\mathbb{C}P^n$ 
and their products via Morse homotopy.}
\newblock {Journal of Mathematical Physics}, 62:3, 032307, 2021. 

\bibitem{fut-kaj2}
M.~Futaki and H.~Kajiura,
\newblock {\em Homological mirror symmetry of $\F_1$ via Morse homotopy.}
\newblock {Advances in Theoretical and Mathematical Physics}, 26(8), 2611-2637, 2022.


\bibitem{hille-perling11}
L.~Hille and M.~Perling.
\newblock {\em Exceptional sequences of invertible sheaves on rational surfaces.}
\newblock {Compositio Mathematica}, 147(4):1230--1280, 2011.


\bibitem{kon94}
M.~Kontsevich.
\newblock {\em Homological algebra of mirror symmetry.}
\newblock Proceedings of the International Congress of Mathematicians, 
Vol.\ 1, 2 (Z\"urich, 1994), 120--139, Birkh\"auser, Basel, 1995. 

\bibitem{KoSo:torus}
M.~Kontsevich and Y.~Soibelman.
\newblock {\em Homological mirror symmetry and torus fibrations.}
\newblock {Symplectic geometry and mirror symmetry (Seoul, 2000)}, pages
  203--263. World Sci. Publishing, River Edge, NJ, 2001.
  
\bibitem{leung05}
N.C.~Leung. 
\newblock {\em Mirror symmetry without corrections.}
\newblock { Communications in Analysis and Geometry}, 13(2):287--331, 2005.

\bibitem{LYZ}
N.C.~Leung, S.-T.~Yau, and E.~Zaslow.
\newblock {\em From special {L}agrangian to hermitian-{Y}ang-{M}ills via
  {F}ourier-{M}ukai transform.}
\newblock  Adv.\ Theor.\ Math.\ Phys.\ {\bf 4} (2000), no.\ 6, 1319--1341.

\bibitem{HN}
H.~Nakanishi. 
\newblock{\em Homological mirror symmetry of toric Fano surfaces via Morse homotopy.}
\newblock {Journal of Mathematical Physics}, 65:5, 053501, 2024. 

\bibitem{seidel}
P. Seidel.
\newblock {\em Fukaya categories and Picard-Lefschetz theory.}
\newblock Vol. 10. European Mathematical Society, 2008.
  
\bibitem{SYZ}
A.~Strominger, S.-T.~Yau, and E.~Zaslow.
\newblock {\em Mirror symmetry is {T}-duality.}
\newblock {Nucl. Phys. B}, 479:243--259, 1996.

\bibitem{U}
K. Ueda.
\newblock {\em Homological mirror symmetry for toric del Pezzo surfaces.}
\newblock {Communications in mathematical physics}, 264(1), 71-85. 2006.

\end{thebibliography}
\end{document}